\theoremstyle{plain}
\newtheorem{theorem}{Theorem}[section]
\newtheorem{thmx}{Theorem}
\theoremstyle{remark}
\newtheorem{remark}[theorem]{Remark}
\theoremstyle{plain}
\newtheorem{corollary}[theorem]{Corollary}
\newtheorem{lemma}[theorem]{Lemma}
\newtheorem{proposition}[theorem]{Proposition}
\newtheorem{definition}[theorem]{Definition}
\numberwithin{equation}{section}
\newtheorem{fact}[theorem]{Fact}
\newtheorem{question}{Question}
\newcommand{\Rea}{\mathbb{R}}
\newcommand{\R}{\mathbb{R}}
\newcommand{\Nat}{\mathbb{N}}
\newcommand{\N}{\mathbb{N}}
\newcommand{\Int}{\mathbb{Z}}
\newcommand{\F}{\mathcal{F}}
\newcommand{\Sph}{\mathbb{S}}
\newcommand{\Hy}{\mathbb{H}}
\newcounter{probcount}
\begin{document}
\title{Approximation properties in Lipschitz-free spaces over groups}
\author{Michal Doucha}
\address{Institute of Mathematics\\
Czech Academy of Sciences\\
\v Zitn\'a 25\\
115 67 Praha 1\\
Czech Republic}
\email{doucha@math.cas.cz}
\author{Pedro L. Kaufmann}
\address{Instituto de Ci\^encia e Tecnologia da Universidade federal de S\~ao Paulo, Av. Cesare Giulio Lattes, 1201, ZIP 12247-014 S\~ao Jos\'e dos Campos/SP, Brasil}
\email{plkaufmann@unifesp.br}
\thanks{P. L. Kaufmann was supported by grants 2017/18623-5 and  2016/25574-8, São Paulo Research Foundation (FAPESP). M. Doucha was supported by the GA\v{C}R project 19-05271Y and RVO: 67985840.} 
\keywords{Lipschitz-free space, Arens-Eells space,  metric approximation property, Schauder basis, compact group, homogeneous space}
\subjclass[2010]{46B20 (primary); 22C05, 42A24 (secondary)}
\begin{abstract}
We study Lipschitz-free spaces over compact and uniformly discrete metric spaces enjoying certain high regularity properties - having group structure with left-invariant metric. Using methods of harmonic analysis we show that, given a compact metrizable group $G$ equipped with an arbitrary compatible left-invariant metric $d$, the Lipschitz-free space over $G$, $\F(G,d)$, satisfies the metric approximation property. We show also that, given a finitely generated group $G$, with its word metric $d$, from a class of groups admitting a certain special type of combing, which includes all hyperbolic groups and Artin groups of large type, $\F(G,d)$ has a Schauder basis. Examples and applications are discussed. In particular, for any net $N$ in a real hyperbolic $n$-space $\mathbb{H}^n$, $\F(N)$ has a Schauder basis. 
\end{abstract}
\maketitle

\section{Introduction}
Lipschitz-free spaces form by now a fundamental class of Banach spaces, whose study has been revitalized since the appearance of the seminal paper of Godefroy and Kalton (\cite{godefroy2003lipschitz}). There are two main important properties that both characterize these spaces. Namely, they are free objects in the category of Banach spaces over the metric spaces. Second, they are canonical isometric preduals to the Banach spaces of pointed Lipschitz real-valued functions. Another appealing feature is that their study connects Banach space theory to several other areas of mathematics, including optimal transport and geometry, and, as we demonstrate here, also harmonic analysis. We recall some basic facts about Lipschitz-free spaces in Section~\ref{section:preliminaries}.

Approximation properties in Lipschitz-free spaces have been one of the main research directions since the publication of \cite{godefroy2003lipschitz}.  It has become clear since then that there are metric spaces such that the corresponding Lipschitz-free space does not have the approximation property, since by \cite[Theorem 5.3]{godefroy2003lipschitz}, a Banach space $X$ has the bounded approximation property if and only if the Lipschitz-free space $\F(X)$ does. The attention was therefore shifted to certain amenable classes of metric spaces, in particular compact metric spaces and, to some extent, also to uniformly discrete metric spaces. The compact metric case is particularly important since it has been shown by Godefroy in \cite{God2015} that the bounded approximation property of Lipschitz-free space over a compact metric space $M$ is equivalent to the existence of linear almost extension operators of Lipschitz functions over subsets of $M$, a subject currently receiving high attention in both geometry and computer science (see e.g. \cite{BruBru} and \cite{LN05}). The question whether Lipschitz-free space over any uniformly discrete metric space has the bounded approximation property is perhaps the most serious and still open, we refer to \cite[Question 1]{godefroy2014free} for a motivation and to \cite{Kalton} for the proof that such a space has the approximation property. Regarding compact metric spaces, the first compact metric space such that the corresponding Lipschitz-free space fails the approximation property has been found in \cite{godefroy2014free}, and later, even a compatible metric on the Cantor space has been found so that the Lipschitz-free space lacks the approximation property (see \cite{HaLaPe}). Positive results have been however obtained in \cite{Dalet-compact} and \cite{dalet2015free} when one restricts to countable compact, resp. proper metric spaces.

The goal of this paper is to consider certain fundamental classes of compact metric spaces, resp. uniformly discrete metric spaces, which are amenable to methods of harmonic analysis, resp. geometry. Namely, compact groups with left-invariant (or right-invariant) metrics, resp. finitely generated groups with word metrics. It turns out that harmonic analytic methods, resp. certain combinatorial and geometric methods, go hand in hand with our goal of showing approximation properties in Lipschitz-free spaces over compact metric groups, resp. finitely generated groups.

In the compact group case we obtain a satisfactory definitive solution.
\begin{thmx}\label{thm:intro1}
Let $G$ be a compact metrizable group with an arbitrary compatible left-invariant (or right-invariant) metric $d$. Then $\F(G,d)$ has the metric approximation property.
\end{thmx}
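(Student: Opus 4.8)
The plan is to exploit left-invariance of $d$ together with Peter--Weyl theory: one builds norm-one finite-rank operators on $\F(G,d)$ by averaging the left-translation action against suitable trigonometric polynomials, and then shows that a net of such operators converges strongly to the identity. Since $d$ is left-invariant, every left translation $L_g\colon x\mapsto gx$ is a surjective isometry of $(G,d)$, hence induces an isometric automorphism of $\F(G,d)$, still denoted $L_g$, determined by $L_g(\delta_x-\delta_y)=\delta_{gx}-\delta_{gy}$. Using left-invariance of $d$ once more one checks that $g\mapsto L_g\gamma$ is continuous for $\gamma$ in the dense linear span of the molecules $\delta_x-\delta_e$, and therefore, by the uniform bound $\|L_g\|=1$, for every $\gamma\in\F(G,d)$; thus $G$ acts strongly continuously and isometrically on $\F(G,d)$. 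For $\psi\in C(G)$ define the averaging operator $P_\psi=\int_G \psi(g)\,L_g\,dg$ as a Bochner integral against normalized Haar measure; then $\|P_\psi\|\le\|\psi\|_1$, so $\|P_\psi\|\le1$ whenever $\psi\ge0$ and $\int_G\psi=1$, and more generally $\|P_\psi-P_{\psi'}\|\le\|\psi-\psi'\|_1$.

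The second ingredient is finite-rankness. Computing the adjoint on $\mathrm{Lip}_0(G,d)=\F(G,d)^*$ gives $(P_\psi^*f)(x)=\int_G\bigl(f(gx)-f(g)\bigr)\psi(g)\,dg$, i.e.\ $P_\psi^*f$ is a convolution $\check\psi*f$ of $f$ with $\check\psi(g):=\psi(g^{-1})$ minus an additive constant; left-invariance of $d$ shows this convolution is again Lipschitz, so $P_\psi^*$ does map $\mathrm{Lip}_0$ into itself. Now if $\psi$ is a trigonometric polynomial, that is, a finite linear combination of matrix coefficients of irreducible unitary representations of $G$, then so is $\check\psi$, and by the Peter--Weyl theorem the Fourier support of $\check\psi*f$ is contained in that of $\check\psi$; hence $P_\psi^*$ takes its values in a fixed finite-dimensional space of trigonometric polynomials, so $P_\psi^*$, and therefore $P_\psi$, has finite rank.

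It remains to produce, for such $\psi$, a net of these operators converging strongly to the identity. Fix an approximate identity $(\phi_\alpha)$ on $G$ of non-negative continuous functions with $\int_G\phi_\alpha=1$ whose supports shrink to $\{e\}$; writing $\gamma=\int_G\phi_\alpha(g)\gamma\,dg$ and splitting the estimate of $\|P_{\phi_\alpha}\gamma-\gamma\|\le\int_G\|L_g\gamma-\gamma\|\,\phi_\alpha(g)\,dg$ near and away from $e$, using continuity of $g\mapsto L_g\gamma$ at $e$ together with $\sup_\alpha\|P_{\phi_\alpha}\|\le1$ and density, one gets $P_{\phi_\alpha}\to\mathrm{Id}$ strongly. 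The operators $P_{\phi_\alpha}$ are contractive but not finite-rank, so the decisive step is to replace each $\phi_\alpha$ by a \emph{non-negative trigonometric polynomial} $\psi_\alpha$ with $\int_G\psi_\alpha=1$ and $\|\psi_\alpha-\phi_\alpha\|_1$ arbitrarily small: approximate $\phi_\alpha^{1/2}\in L^2(G)$ by a real trigonometric polynomial $p$ (Peter--Weyl once more), square it --- $p^2$ is still a trigonometric polynomial because the tensor product of finitely many irreducibles decomposes into finitely many, and non-negative --- and renormalize; the Cauchy--Schwarz bound $\|p^2-\phi_\alpha\|_1\le\|p-\phi_\alpha^{1/2}\|_2\,\|p+\phi_\alpha^{1/2}\|_2$ controls the $L^1$-error. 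Then $\|P_{\psi_\alpha}-P_{\phi_\alpha}\|\le\|\psi_\alpha-\phi_\alpha\|_1\to0$, so $P_{\psi_\alpha}\to\mathrm{Id}$ strongly while each $P_{\psi_\alpha}$ is finite-rank of norm $\le1$; this is the metric approximation property. The right-invariant case reduces to the left-invariant one via the isometry $x\mapsto x^{-1}$ from $(G,d)$ onto $(G,d')$ with $d'(x,y):=d(x^{-1},y^{-1})$.

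The main obstacle is precisely the tension resolved in the last paragraph: trigonometric polynomials are what make $P_\psi$ finite-rank, probability densities are what make $P_\psi$ a contraction, yet a generic approximate identity is neither a trigonometric polynomial nor remains non-negative under trigonometric approximation. The squaring trick $\psi\rightsquigarrow p^2/\!\int p^2$ is the device that simultaneously secures both properties, and verifying that $P_\psi^*$ genuinely lands in $\mathrm{Lip}_0(G,d)$ (rather than just in $C(G)$) is where left-invariance of the metric is used in an essential way.
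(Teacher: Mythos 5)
Your argument is correct, and it takes a genuinely different route from the paper. You work directly on $\F(G,d)$: you average the strongly continuous isometric translation action $g\mapsto L_g$ against non-negative trigonometric-polynomial approximate identities, obtaining contractive finite-rank operators converging strongly to the identity; finite-rankness is read off from the adjoint being (up to an additive constant) convolution with $\check\psi$, and the squaring trick $p\mapsto p^2/\int p^2$ applied to an $L^2$-approximation of $\phi_\alpha^{1/2}$ resolves the positivity-versus-finite-rank tension on an \emph{arbitrary} compact group. The paper instead proceeds in two stages: for compact Lie groups it uses a specific summability kernel from Hewitt--Ross (a sequence of positive, central, positive-definite trigonometric polynomials with almost-everywhere convergence of $f\ast F_n$), verified against a predual criterion (operators on $C(K)$ preserving Lipschitz functions, Proposition~\ref{tool}, plus a $w^*$-continuity lemma); for a general compact metrizable $G$ it writes $G$ as an inverse limit of compact Lie quotients $G/H_n$, builds norm-one projections of $\F(G)$ onto isometric copies of $\F(G/H_n)$ (Proposition~\ref{prop:projection}), and proves $P_n\to\mathrm{Id}$ strongly using a bi-invariant majorizing metric (Fact~\ref{fact2}). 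Your approach is shorter and more self-contained -- it needs only Peter--Weyl and avoids the Lie-group reduction, the Hewitt--Ross theorems, and the quotient/projection machinery -- because strong operator convergence of $P_{\psi_\alpha}$ is all that is required, not the pointwise a.e.\ summability the cited kernels provide; the paper's heavier route, on the other hand, yields the quotient projections as a by-product, which it then exploits for Theorem~\ref{thm:homogeneousspace} on homogeneous spaces. One cosmetic slip: the continuity of $g\mapsto L_g\gamma$ does not come from left-invariance of $d$ but from compatibility of $d$ with the group topology together with continuity of multiplication (left-invariance is what gives $\|L_g\|=1$ and the Lipschitz estimate for $\check\psi\ast f$); and when you replace each $\phi_\alpha$ by $\psi_\alpha$ you should tie the $L^1$-error to the index (say $\|\psi_\alpha-\phi_\alpha\|_1\le\varepsilon_\alpha\to0$) so that the resulting family is honestly a net converging strongly to the identity -- both are routine to fix.
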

Just to show a meager application, we recall that there has been interest in for which compatible metrics of the Cantor space the corresponding Lipschitz-free space has some approximation property. Godefroy and Ozawa show in \cite{godefroy2014free} that for certain `small Cantor spaces', the free space has the metric approximation property. On the other hand, H\' ajek, Lancien, and Perneck\' a show in \cite{HaLaPe} that there are `fat Cantor spaces' for which the free space does not have the approximation property. We recall that there is a very large and thoroughly studied class of compact (metrizable) groups, the \emph{profinite (metrizable) groups}, which are inverse limits of finite groups. So in the infinite metrizable case, they are topologically totally disconnected uncountable metrizable spaces without isolated points - therefore homeomorphic to the Cantor space (see the monograph \cite{profinite} for more information on profinite groups). It turns out that for any compatible and left-invariant metric on any such group structure on the Cantor space we get a free space with the metric approximation property.

In case of finitely generated groups, the metric approximation property follows from known results (see Section~\ref{section:fingengrps}), so we aim for much stronger property, having the Schauder basis, at the cost of having less general result that applies just to a certain subclass of finitely generated groups. We state the result and postpone the definition of the new notions to the corresponding section.
\begin{thmx}\label{thm:intro2}
Let $G$ be a shortlex combable group with its word metric $d$. Then $\F(G,d)$ has the Schauder basis {\it (see Theorem~\ref{thm:shortlex})}.
\end{thmx}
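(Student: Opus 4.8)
The plan is to extract from the shortlex combing a sequence $0=R_0\le R_1\le R_2\le\cdots$ of mutually commuting, uniformly bounded, finite-rank projections on $\F(G,d)$ whose ranks increase by exactly one at each step and which converge strongly to the identity. By the standard criterion for Schauder bases, any such sequence yields a basis of $\F(G,d)$ with basis constant at most $\sup_n\|R_n\|$: one picks a nonzero $e_n\in\mathrm{ran}\,R_n\cap\ker R_{n-1}$ (a one-dimensional space), and the $R_n$ become the partial-sum projections. So the whole proof is the construction of the $R_n$ and, above all, the bound $\sup_n\|R_n\|<\infty$.

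\textbf{Combing, enumeration, retractions.} Fix an order on $S$ and let $\sigma(g)$ be the shortlex-least word over $S\cup S^{-1}$ representing $g$; put $\ell_g=|\sigma(g)|=d(e,g)$, and let $\sigma(g)(t)$ denote the vertex of the Cayley graph reached after reading the first $\min(t,\ell_g)$ letters of $\sigma(g)$, so that $\sigma(g)(0)=e$ and $\sigma(g)(t)=g$ for $t\ge\ell_g$. Two facts hold for the shortlex combing in \emph{every} finitely generated group: it is geodesic, and it is prefix-closed, i.e. $\sigma(\sigma(g)(t))$ is exactly the length-$t$ prefix of $\sigma(g)$. Using $e$ as base point, enumerate $G=\{g_0=e,g_1,g_2,\dots\}$ in shortlex order and set $A_n=\{g_0,\dots,g_n\}$; prefix-closedness makes each $A_n$ combing-closed ($g\in A_n\Rightarrow\sigma(g)(t)\in A_n$ for all $t$). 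Then
\[
 r_n\colon G\to A_n,\qquad r_n(g)=\sigma(g)\big(\max\{t:\sigma(g)(t)\in A_n\}\big)
\]
is a well-defined Lipschitz retraction onto $A_n$ (well-defined as $e\in A_n$, a retraction by combing-closedness), and prefix-closedness also gives $r_m\circ r_n=r_{\min(m,n)}$. Let $R_n$ be the operator on $\F(G,d)$ induced by $r_n$ through functoriality of $\F$, post-composed with the isometric inclusion $\F(A_n)\hookrightarrow\F(G,d)$. Then $R_n$ is a projection with $\|R_n\|\le\mathrm{Lip}(r_n)$, $R_mR_n=R_{\min(m,n)}$, $\mathrm{rank}\,R_n=|A_n|-1=n$, $R_0=0$; and, granting the uniform bound below, $R_n\to\mathrm{Id}$ strongly because finitely supported measures are dense in $\F(G,d)$ and each is eventually fixed by the $r_n$. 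Thus it remains to bound $\sup_n\mathrm{Lip}(r_n)$.

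\textbf{The uniform Lipschitz bound (the crux).} Since $d$ is an integer-valued length metric, $\mathrm{Lip}(r_n)$ equals the supremum of $d(r_n(g),r_n(h))$ over adjacent $g,h$; fix such a pair. Let $m$ be the integer with $A_n=B(e,m-1)\cup T$ where $T\subseteq\{x:\ell_x=m\}$ is a lexicographic initial segment (possibly empty). Because $\sigma$ is geodesic, the vertices of $\sigma(g)$ have strictly increasing distance from $e$, and a direct case check on whether $\ell_g<m$, $=m$, or $>m$ shows that for every $g$,
\[
 r_n(g)\in\{\sigma(g)(m-1),\sigma(g)(m)\},\qquad d\big(\sigma(g)(m-1),\sigma(g)(m)\big)\le 1 .
\]
Hence $d(r_n(g),r_n(h))\le 2+d\big(\sigma(g)(m),\sigma(h)(m)\big)$. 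It is precisely here that shortlex combability is used: by hypothesis the shortlex combing has the (synchronous) $k$-fellow traveller property, i.e. there is $k=k(G,S)$ with $d(\sigma(g)(t),\sigma(h)(t))\le k$ for all adjacent $g,h$ and all $t\ge0$. Taking $t=m$ gives $d(r_n(g),r_n(h))\le k+2$, hence $\mathrm{Lip}(r_n)\le k+2$ for all $n$.

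\textbf{Conclusion and main obstacle.} We have produced on $\F(G,d)$ a commuting, increasing sequence of finite-rank projections $0=R_0\le R_1\le\cdots$ with $\sup_n\|R_n\|\le k+2$, $\mathrm{rank}\,R_n=n$, and $R_n\to\mathrm{Id}$ strongly, so choosing nonzero $e_n\in\mathrm{ran}\,R_n\cap\ker R_{n-1}$ for $n\ge1$ gives a Schauder basis of $\F(G,d)$. The substantive part is the uniform Lipschitz estimate, which is exactly where the hypothesis bites — the fellow traveller property is what keeps the truncations $\sigma(\cdot)(m)$, hence the combing-closed retractions $r_n$, uniformly Lipschitz — while the only other point requiring attention is that one must project onto the one-point-at-a-time shortlex segments $A_n$ (to obtain the rank-one increments) rather than onto the balls $B(e,m)$; this substitution costs merely the harmless additive constant $2$ in the bound. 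Since the shortlex combing of a hyperbolic group, and of an Artin group of large type, is known to enjoy the fellow traveller property, those classes furnish the advertised examples, and the statement for a net $N\subseteq\mathbb{H}^n$ follows, via the transfer argument in the corresponding section, from the case of a cocompact lattice in $\mathrm{Isom}(\mathbb{H}^n)$, which is a hyperbolic group.
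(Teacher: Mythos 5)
Your proposal is correct and follows essentially the same route as the paper: enumerate $G$ in shortlex order, retract onto the initial segments via truncation of the shortlex-least words (the paper's maps $P_n$ coincide with your $r_n$), use the combability hypothesis to get a uniform Lipschitz bound (the paper gets $K+1$, you get $K+2$ — immaterial), and lift to uniformly bounded commuting finite-rank projections converging strongly to the identity, concluding by the standard basis criterion. The minor differences (your explicit appeal to prefix-closedness, which the paper leaves to the reader, and your extension of the fellow-traveller bound to all $t$, which follows trivially from the stated hypothesis) are cosmetic.
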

We mention that the theorem applies in particular to hyperbolic groups and Artin groups of large type. One of the applications (see Corollary~\ref{cor:hyperbolicnet}) is that the Lipschitz-free space over any net in the real hyperbolic $n$-space $\Hy^n$ has the Schauder basis. \\

The paper is organized as follows. In Section \ref{section:preliminaries} we present a characterization of the $\lambda$-bounded approximation property tailored for Lipschitz-free spaces (Proposition \ref{tool}). In Section \ref{section:cpt} we prove Theorem \ref{thm:intro1}; first we tackle Lie groups using harmonic analysis tools in Subsection \ref{subsection:cptLie}, then in Subsection \ref{subsection:generalCpt} we prove the general case by approximating compact groups by compact Lie ones. In this last subsection we also generalize Theorem \ref{thm:intro1} to compact homogeneous spaces equipped with quotient metrics (Theorem \ref{thm:homogeneousspace}). Section \ref{section:fingengrps} is dedicated to finitely generated groups; we prove Theorem \ref{thm:intro2} and provide some examples and applications. We conclude with some remarks and questions in Section \ref{section:problems}, and presenting in Appendix \ref{appendSphere} a generalization of Theorem \ref{thm:homogeneousspace} for the specific case of the euclidean sphere.

\section{Preliminaries}\label{section:preliminaries}
\subsection{Lipschitz-free spaces}
Let $M$ be a metric space and $0$ be some distinguished point in $M$. Let $\mathrm{Lip}_0(M)$ denote the Banach space of real-valued Lipschitz functions defined on $M$ which vanish at $0$, equipped with the  norm $\|\cdot\|_{\mathrm{Lip}}$ which assigns to each function its minimal Lipschitz constant. The Lipschitz-free space over $M$, denoted by $\F(M)$, is the canonical isometric predual of $\mathrm{Lip}_0(M)$ given by the closed linear span of $\{\delta(x):x\in M\}$ in $\mathrm{Lip}_0(M)^*$, where each $\delta(x)$ is the evaluation functional defined by $\delta(x)(f):=f(x)$. This gives $\mathrm{Lip}_0(M)$ a $w^*$ topology which coincides, on bounded sets of $\mathrm{Lip}_0(M)$, with the topology of pointwise convergence. $\F(M)$ satisfies a powerful universal property: given a Banach space $X$ and a Lipschitz function $F:M\to X$ vanishing at $0$, there exists a unique bounded linear operator $T:\F(M)\to X$ such that $T\circ \delta = F$. Its operator norm coincides with the Lipschitz constant of $F$. We refer to Weaver's book \cite{weaver1999lipschitz} for a thorough introduction to the subject. There, Lipschitz-free spaces are denominated Arens-Eells spaces. 
\subsection{Verifying if a Lipschitz-free space has the BAP}
In this subsection we present a characterization of $\lambda$-bounded approximation property suited for Lipschitz-free spaces (Proposition \ref{tool} below). Let us briefly recall and comment the definition of this property: 

\begin{definition}[Bounded Approximation Property]

Let $X$ be a Banach space, and $\lambda\geq 1$. We say that $X$ has the $\lambda$-approximation property ($\lambda$-BAP for short) if one of the following equivalent assertions holds: 

\begin{enumerate}
\item For each compact subset $K$ of $X$ and each $\epsilon>0$, there is a $\lambda$-bounded finite rank operator $T$ on $X$ such that $\|Tx-x\|<\epsilon$, for each $x\in K$. 
\item For each finite subset $F$ of $X$ and each $\epsilon>0$, there is a $\lambda$-bounded finite rank operator $T$ on $X$ such that $\|Tx-x\|<\epsilon$, for each $x\in F$.
\item There is a $\lambda$-bounded net of finite rank operators $(T_\alpha)$ on $X$ such that $\langle \varphi, T_\alpha x\rangle \rightarrow \langle \varphi,x\rangle$ for each $\varphi\in X^*$ and each $x\in X$ (that is, $T_\alpha$ converges to the identity operator in the \emph{weak operator topology}).
\end{enumerate}

If a Banach space $X$ has the 1-BAP, we say that $X$ has the metric approximation property (MAP for short). 

\label{defBAP}
\end{definition}

Formulations (1) and (2) are classic; their equivalence with (3) is shown for instance in \cite{kim2008characterizations}. Recall that a Banach space $X$ has the $\lambda$-BAP if and only if, for each $\delta>0$, $X$ has the $(\lambda+\delta)-BAP$. To see this, fix a compact set $K\subset X$ and $\varepsilon>0$, and take $\delta>0$ small enough so that $M\delta(\lambda+\delta)/\lambda < \epsilon/2$, where $M=\sup_{x\in K}\|x\|$. Let $T$ be a finite rank, $(\lambda+\delta)$-bounded operator on $X$ such that $\|Tx - x\|<\epsilon/2$, for all $x\in K$.  Then it is immediately verified that the $\lambda$-bounded operator $S=\lambda T/\|T\|$ satisfies, for each $x\in K$, $\|Sx - x\| <\varepsilon.$ \\

In what follows $\mathrm{Lip}(M)$ denotes the space of real-valued functions defined on the metric space $M$. We will still use $\|f\|_{\mathrm{Lip}}$ to denote the Lipschitz constant of $f\in \mathrm{Lip}(M)$, keeping in mind that $\|\cdot\|_{\mathrm{Lip}}$ defines only a seminorm in $\mathrm{Lip}(M)$. 

\begin{proposition}[Characterization of BAP for Lipschitz-free spaces]

Let $K$ be a compact metric space and  $\lambda\geq 1$. The following assertions are equivalent: 
\begin{enumerate}
\item $\F(K)$ has the $\lambda$-BAP; 
\item for each $\varepsilon>0$ there is a net $T_\alpha$ of bounded operators on $C(K)$ such that 
\begin{enumerate}
\item $T_\alpha$ are of finite rank,
\item each $T_\alpha$ maps Lipschitz functions to Lipschitz functions, 
\item $\|T_\alpha f\|_{\mathrm{Lip}}\leq (\lambda+\varepsilon)\|f\|_{\mathrm{Lip}}$, for each $\alpha$ and each $f\in \mathrm{Lip}(K)$, and
\item $T_\alpha f$ converges pointwise to $f$, for each $f\in \mathrm{Lip}(K)$.
\end{enumerate}

\end{enumerate}
\label{tool}
\end{proposition}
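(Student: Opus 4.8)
The plan is to exploit the duality between $\F(K)$ and $\mathrm{Lip}_0(K)$, together with the canonical isometric identification of $\mathrm{Lip}_0(K)$ with a weak-$*$ closed subspace (modulo constants) of $C(K)$'s bidual picture, and transfer finite-rank operators back and forth between $\F(K)$ and $C(K)$. More precisely, a finite-rank operator $T$ on $\F(K)$ has an adjoint $T^*$ on $\mathrm{Lip}_0(K)$; one wants to realize $T^*$ as (the restriction of) a finite-rank operator on $C(K)$, and conversely to build from a net $(T_\alpha)$ on $C(K)$ as in (2) a net of finite-rank operators on $\F(K)$ witnessing the $\lambda$-BAP. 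The key structural fact making this work is that $\F(K)$ embeds isometrically into $C(K)^*$ (via $\delta(x)\mapsto$ the evaluation-at-$x$ functional, after fixing the base point $0$ so that we really work with functions vanishing at $0$), and that $C(K)$ is dense-in-norm in a suitable sense that lets Lipschitz functions be approximated.

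The steps I would carry out, in order, are as follows. First, for the implication (1)$\Rightarrow$(2): given $\lambda$-bounded finite-rank operators $(S_\alpha)$ on $\F(K)$ converging to the identity in the weak operator topology (formulation (3) of Definition~\ref{defBAP}), take adjoints $S_\alpha^*$ on $\mathrm{Lip}_0(K)$. These are finite-rank, $\lambda$-bounded, and converge to the identity in the weak-$*$ operator topology, hence pointwise on $K$ (evaluating against $\delta(x)-\delta(0)$). The task is to extend each $S_\alpha^*$ to a finite-rank operator $T_\alpha$ on all of $C(K)$ with control on the Lipschitz seminorm; since $S_\alpha^*$ has finite-dimensional range spanned by finitely many Lipschitz functions $g_1,\dots,g_n$, and since $S_\alpha^*$ is given by $f\mapsto \sum_i \mu_i(f) g_i$ for some $\mu_i\in \mathrm{Lip}_0(K)^*=\F(K)$ — and elements of $\F(K)$ are, being limits of finitely supported molecules, extendable (via Hahn–Banach or via an explicit averaging) to functionals on $C(K)$ of the same norm — one obtains $T_\alpha$ on $C(K)$. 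Allowing the extra $\varepsilon$ in the Lipschitz estimate absorbs any loss from a slightly imperfect extension; here one uses the remark from the excerpt that $\lambda$-BAP is equivalent to $(\lambda+\delta)$-BAP for all $\delta>0$, which gives the needed slack. Second, for (2)$\Rightarrow$(1): given the net $(T_\alpha)$ on $C(K)$ from (2), restrict each $T_\alpha$ to $\mathrm{Lip}(K)$, quotient by constants to land in $\mathrm{Lip}_0(K)$ (adjusting by $-T_\alpha(f)(0)$ if necessary so that the image vanishes at $0$, which does not change the Lipschitz seminorm), obtaining finite-rank operators $R_\alpha$ on $\mathrm{Lip}_0(K)$ that are $(\lambda+\varepsilon)$-bounded and converge pointwise, hence weak-$*$ to the identity. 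Then $R_\alpha$ is weak-$*$ continuous (finite rank with weak-$*$ continuous coordinate functionals, since pointwise evaluations lie in $\F(K)$), so it is the adjoint of a finite-rank operator $S_\alpha = (R_\alpha)_*$ on $\F(K)$; the net $(S_\alpha)$ is $(\lambda+\varepsilon)$-bounded and converges to the identity in the weak operator topology, so $\F(K)$ has the $(\lambda+\varepsilon)$-BAP, and letting $\varepsilon\to 0$ and invoking the remark again yields the $\lambda$-BAP.

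The main obstacle I expect is the extension step in (1)$\Rightarrow$(2): producing from a bounded operator on $\mathrm{Lip}_0(K)$ a genuinely bounded operator on $C(K)$ that still maps Lipschitz functions to Lipschitz functions with the right seminorm bound, rather than merely a bounded operator on the subspace $\mathrm{Lip}(K)\subset C(K)$. The cleanest route is to observe that the coordinate functionals $\mu_i$ of the finite-rank $S_\alpha^*$ live in $\F(K)$, each of which extends to a norm-preserving (or nearly norm-preserving) functional on $C(K)$ by Hahn–Banach, so that $T_\alpha f := \sum_i \tilde\mu_i(f)\, g_i$ is defined and finite-rank on all of $C(K)$, agrees with $S_\alpha^*$ on $\mathrm{Lip}_0(K)$ up to the constants one has quotiented out, and inherits the Lipschitz bound (up to $\varepsilon$) precisely because on $\mathrm{Lip}(K)$ it coincides with the original operator. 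One must also be a little careful about the base point and the difference between $\mathrm{Lip}(K)$ (a space of functions, with $\|\cdot\|_{\mathrm{Lip}}$ only a seminorm) and $\mathrm{Lip}_0(K)$ (a genuine Banach space), but these are bookkeeping matters that the freedom of an $\varepsilon$ and the normalization trick $S \mapsto \lambda S/\|S\|$ from the preliminaries comfortably handle.
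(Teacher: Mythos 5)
Your overall strategy---pass to adjoints/preadjoints through the duality $\F(K)^*=\mathrm{Lip}_0(K)$, recenter at the base point, and absorb losses with the $\varepsilon$-slack remark---is the same as the paper's, but one step of your (1)$\Rightarrow$(2) direction fails as written. You claim that the coordinate functionals of $S_\alpha^*$, being induced by elements $y_i\in\F(K)$ which are limits of finitely supported molecules, extend ``via Hahn--Banach\dots to functionals on $C(K)$ of the same norm.'' This is false: an element of $\F(K)$ acts on $\mathrm{Lip}_0(K)$ continuously for the Lipschitz norm but in general \emph{not} for the uniform norm, so Hahn--Banach on $(C(K),\|\cdot\|_\infty)$ does not apply; and even when a bounded extension exists the norms need not agree (a molecule $\sum_j a_j\delta(x_j)$ has $C(K)^*$-norm $\sum_j |a_j|$, typically much larger than its $\F(K)$-norm). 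Concretely, in $\F([0,1])\cong L^1[0,1]$ the functional $f\mapsto\int_0^1 f'g\,dt$ with $g\in L^1\setminus BV$ is not given by any measure, hence has no bounded extension to $C([0,1])$ at all. The step can be repaired, but it needs an idea you did not supply: first perturb the finite-rank operators witnessing the $\lambda$-BAP so that their ranges lie in $\mathrm{span}\{\delta(x):x\in K\}$ (possible up to an $\varepsilon$ loss, since molecules are dense in $\F(K)$); then the coordinate functionals of the adjoints are finitely supported measures and the extension to $C(K)$ is immediate. (Note also that $\mathrm{Lip}_0(K)^*\neq\F(K)$; what is true, and what you actually need, is that the coordinate functionals of an adjoint $S_\alpha^*$ come from the range vectors of $S_\alpha$, which lie in $\F(K)$.)

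In (2)$\Rightarrow$(1) your structure matches the paper's, but the parenthetical justification for the $w^*$-continuity of $R_\alpha$ (``finite rank with weak-$*$ continuous coordinate functionals, since pointwise evaluations lie in $\F(K)$'') is not an argument: the coordinate functionals of $R_\alpha$ are restrictions of elements of $C(K)^*$, i.e.\ of measures, not of combinations of point evaluations, and the fact that a finite measure on a compact metric space induces a $w^*$-continuous functional on $\mathrm{Lip}_0(K)$ itself requires proof. This is exactly what the paper isolates as Lemma \ref{Trest}: on bounded subsets of $\mathrm{Lip}_0(K)$, $w^*$-convergence is pointwise convergence, which by equicontinuity and compactness of $K$ is uniform convergence; the boundedness of $T_\alpha$ on $C(K)$ then gives continuity on bounded balls, and Banach--Dieudonn\'e upgrades this to genuine $w^*$-continuity. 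With that lemma (or an equivalent argument for the measures serving as coordinate functionals) inserted, your (2)$\Rightarrow$(1) direction is correct and coincides with the paper's proof.
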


For the proof we will need the following: 

\begin{lemma}

Let $K$ be a compact metric space, fix  $0\in M$, and let $T$ be a bounded operator on $C(K)$ such that $T(B_{\mathrm{Lip}_0(K)})\subset A.B_{ \mathrm{Lip}_0(K)}$ for some $A>0$.  Then $T$ restricted to $\mathrm{Lip}_0(K)$, when seen as an operator on $\mathrm{Lip}_0(K)$, is $w^*$-continuous. 

\label{Trest}
\end{lemma}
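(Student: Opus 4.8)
The plan is to use that $\mathrm{Lip}_0(K)$ is the dual of $\F(K)$ and reduce, via the Krein--Smulian theorem, to checking $w^*$-continuity only on the closed unit ball of $\mathrm{Lip}_0(K)$; on that ball the $w^*$-topology is just pointwise convergence on $K$, and there the argument becomes elementary because $T$ is bounded for the sup-norm and an equi-Lipschitz net converging pointwise on the compact space $K$ converges uniformly.

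First I would unwind the hypothesis: by homogeneity, $T(B_{\mathrm{Lip}_0(K)})\subseteq A\cdot B_{\mathrm{Lip}_0(K)}$ says exactly that $T$ maps $\mathrm{Lip}_0(K)$ into itself with $\|Tf\|_{\mathrm{Lip}}\leq A\|f\|_{\mathrm{Lip}}$ for all $f\in\mathrm{Lip}_0(K)$, so $S:=T|_{\mathrm{Lip}_0(K)}$ is a well-defined bounded operator on $(\mathrm{Lip}_0(K),\|\cdot\|_{\mathrm{Lip}})=\F(K)^*$. Then I would invoke the standard consequence of the Krein--Smulian theorem that a bounded operator on a dual Banach space is $w^*$-to-$w^*$ continuous as soon as its restriction to the closed unit ball is; since $S(B_{\mathrm{Lip}_0(K)})\subseteq A\cdot B_{\mathrm{Lip}_0(K)}$ and both sets are $w^*$-compact by Banach--Alaoglu, it suffices to show that $S$ is continuous from $(B_{\mathrm{Lip}_0(K)},w^*)$ to $(A\cdot B_{\mathrm{Lip}_0(K)},w^*)$. (Equivalently, one may apply the functional version of Krein--Smulian to each functional $f\mapsto (Tf)(x)$, $x\in K$, and assemble, using $\F(K)=\overline{\operatorname{span}}\{\delta(x):x\in K\}$.)

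On these bounded sets the $w^*$-topology coincides with the topology of pointwise convergence on $K$, as recalled in Section~\ref{section:preliminaries}, so I would take a net $(f_\alpha)\subseteq B_{\mathrm{Lip}_0(K)}$ with $f_\alpha\to f$ pointwise and prove $Tf_\alpha\to Tf$ pointwise. The one nontrivial point is the Ascoli-type upgrade: $\{f_\alpha\}\cup\{f\}$ is equi-$1$-Lipschitz, so given $\varepsilon>0$ and a finite $\tfrac{\varepsilon}{3}$-net $\{x_1,\dots,x_n\}$ of $K$, the single (eventually true) condition $\max_i|f_\alpha(x_i)-f(x_i)|<\tfrac{\varepsilon}{3}$ forces $\|f_\alpha-f\|_\infty<\varepsilon$, i.e.\ $f_\alpha\to f$ uniformly; since $T$ is bounded on $C(K)$, $Tf_\alpha\to Tf$ uniformly, hence pointwise, which is convergence in the $w^*$-topology of $A\cdot B_{\mathrm{Lip}_0(K)}$. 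This yields the required continuity on the unit ball, and hence, by the reduction above, the $w^*$-continuity of $S=T|_{\mathrm{Lip}_0(K)}$. The main obstacle — really the only place demanding care — is stating and applying the Krein--Smulian reduction correctly; the rest is homogeneity bookkeeping, the equicontinuity-to-uniform-convergence step, and the observation that a sup-norm bounded operator preserves uniform limits.
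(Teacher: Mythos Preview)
Your proposal is correct and follows essentially the same route as the paper's proof: reduce to $w^*$-continuity on the unit ball via Krein--Smulian (the paper invokes the equivalent Banach--Dieudonn\'e theorem), use that on bounded sets the $w^*$-topology is pointwise convergence, upgrade pointwise to uniform convergence via equi-Lipschitzness and compactness of $K$, push uniform convergence through the $C(K)$-bounded operator $T$, and then read the uniform (hence pointwise) convergence of $Tf_\alpha$ as $w^*$-convergence inside the bounded set $A\cdot B_{\mathrm{Lip}_0(K)}$. The only cosmetic differences are your more explicit $\varepsilon/3$-net argument for the Ascoli step and the naming of the reduction theorem.
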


\begin{proof} Consider $S=T|_{\mathrm{Lip}_0(K)}$ as an operator on $\mathrm{Lip}_0(K)$. Let $f_\nu$ be a bounded net $w^*$-converging to $f$ in $B_{\mathrm{Lip}_0(K)} $. Since on bounded sets the $w^*$ topology coincides with the topology of pointwise convergence and $f_\nu$ are equicontinuous, it follows that $f_\nu$ converges to $f$ uniformly. By the norm continuity of $T$, $Sf_\nu$ converges to  $Sf$ uniformly and, since $\{Sf_\nu\}$ is $A$-bounded in the Lipschitz norm, $Sf_\nu$ $w^*$-converges to $Sf$. That is, $S$ is $w^*$-continuous when restricted to $B_{\mathrm{Lip}_0(K)}$. It follows by Banach-Dieudonn\'e's theorem that $S$ is $w^*$-continuous in $\mathrm{Lip}_0(K)$. \end{proof}

\begin{proof}[Proof of Proposition \ref{tool}] ((1)$\Rightarrow$(2)) Let $T_\alpha$ be a net of $\lambda$-bounded finite rank operators on $\F(K)$ converging to the identity in the weak operator topology. Their adjoints $T_\alpha^*:\mathrm{Lip}_0(K)\rightarrow \mathrm{Lip}_0(K)$ can be extended to operators on $C(K)$ that clearly satisfy properties (a)--(d).\medskip

((2)$\Rightarrow$(1)) Choose some $0\in K$, and $T_\alpha$ satisfying (a)--(d). It is easily checked that the operators $S_\alpha$ defined on $C(K)$ by
$$
S_\alpha (f)(x) = T_\alpha (f)(x) - T_\alpha (f)(0)
$$
are bounded, satisfy the same assumptions (a)--(d) and $S_\alpha$ are moreover ($\lambda+\varepsilon$)-bounded operators on $\mathrm{Lip}_0(K)$. Each $S_\alpha$ thus satisfies the assumptions of Lemma \ref{Trest}, and it follows that there are finite-rank, ($\lambda+\varepsilon$)-bounded operators $R_\alpha$ on $\F(K)$ with $R_\alpha^*=S_\alpha$, that is: 
$$
\langle S_\alpha f,\gamma\rangle = \langle f, R_\alpha \gamma\rangle,\,\forall f\in \mathrm{Lip}_0(K),\,\forall \gamma \in \F(K).
$$
Now since $S_\alpha f$ is a bounded net in $\mathrm{Lip}_0(K)$ converging pointwise to $f$, it must converge $w^*$, thus $R_\alpha$ converges to $Id_{\F(M)}$ in the weak operator topology. The conclusion follows from the remark after Definition \ref{defBAP}. \end{proof}

We point out that the above characterization can be generalized to an arbitrary metric space $M$ if we forget the part of $T_\alpha$ being bounded operators on some $C(K)$, and assume that $T_\alpha$ are ($\lambda+\varepsilon)$-bounded operators on $\mathrm{Lip}(M)$ which are moreover pointwise continuous. We omit the proof, which follows the same lines.

\section{Approximation properties of Lipschitz-free spaces over compact groups}\label{section:cpt}

The classical Birkhoff-Kakutani theorem says that a topological group is metrizable if and only if it is metrizable by a left-invariant (equivalently right-invariant) metric if and only if it is Hausdorff and first countable. Compatible left-invariant metrics on a fixed metrizable group are easily seen to be unique up to uniform equivalence. However, a group can admit compatible and left-invariant metrics which are not bi-Lipschitz equivalent. It is even possible that the corresponding Lipschitz-free spaces are not isomorphic. For example, if $d$ is the arc length metric on the torus $\mathbb{T}$, the Lipschitz-free space $\F(\mathbb{T},d)$ is isomorphic to $L^1$, but if we substitute $d$ by $d^\alpha$ with $0<\alpha<1$ (in the process often referred to as \emph{snowflaking}),  $\F(\mathbb{T},d^\alpha)$ is isomorphic to $\ell_1$ (see \cite[Theorem 6.5]{Kalton}).

In this section we focus on compact metrizable groups (or equivalently, compact first countable groups) and the issue of many non-Lipschitz equivalent left-invariant metrics does not bother us. We use methods of harmonic analysis that are robust enough to make our proofs work for arbitrary compactible left or right-invariant metrics on any compact metrizable group. The main goal of the section is to prove Theorem~\ref{thm:intro1}, however we will have something to say also about free spaces over certain homogeneous spaces of compact groups (see Theorem~\ref{thm:homogeneousspace}).

The section is divided into two subsections. One is dealing with compact Lie groups where the machinery of harmonic analysis is used. The other is dealing with general compact metrizable groups using the fact each such a group can be approximated by compact Lie groups. This approximation then lifts to the corresponding Lipschitz-free space; indeed, we shall show that Lipschitz-free space over a compact metric group can be approximated by Lipschitz-free spaces over compact Lie groups.

\subsection{Lipschitz-free spaces over compact Lie groups}\label{subsection:cptLie}\hfill\bigskip
\label{cpt}

From this point on, unless stated otherwise, we always assume that each locally compact group $G$ is equipped with a left-invariant Haar measure, denoted by $\mu$. When $G$ is compact, $\mu$ is assumed to be bi-invariant and of course normalized.

Let us start some basic definitions and facts from representation theory of compact groups, which can be found in any standard textbook on harmonic analysis or representation theory of compact groups.

Let $G$ be a compact group. A \emph{unitary representation} $U$ is a strongly continuous group homomorphism $x\in G\mapsto U_x \in B(H_U)$ (i.e. on $B(H_U)$ we consider the strong operator topology), where $H_U$ is a complex Hilbert space, such that each operator $U_x$ is unitary on $H_U$. Such unitary representation is said to be \emph{irreducible} if the only invariant subspaces of $H_U$, i.e. subspaces preserved by all $U_x$, are $\{0\}$ and $H_U$. In our specific case where $G$ is compact, all continuous irreducible unitary representations $x\in G\mapsto U_x \in B(H_U)$ satisfy $\dim H_U <\infty$ (see \cite[Theorem 22.13]{hewitt2012abstract}). Given an irreducible unitary representation $U:G\rightarrow B(H_U)$ and non-zero vectors $\xi,\eta\in H_U$, the function of the form $$x\in G\mapsto \langle U_x \xi,\eta\rangle \in \mathbb{C}$$ is called a \emph{matrix coefficient} (associated to $U$). If $\xi=\eta$, we call such matrix coefficient a \emph{positive-definite function}. If $H_U$ is equipped with a fixed orthonormal basis $\{\zeta^U_1,...,\zeta^U_{d_U}\}\subseteq H_U$, then the matrix coeeficients $$\varphi^U_{jk}(x) = \langle U_x \zeta^U_j,\zeta^U_k\rangle$$ are called \emph{coordinate functionals} (associated to $U$ and $\{\zeta^U_1,...,\zeta^U_{d_U}\}$). They satisfy the following property, as a consequence of \cite[Theorem 27.20]{hewitt2013abstract}:
\begin{eqnarray}
\forall f\in C(G)\,\forall x\in G,\, f\ast \varphi^U_{jk}(x) = \sum_{r=1}^{d_U} \int f(y)\,\overline{\varphi^U_{rj}(y)}\,d\lambda(y)\, \varphi^U_{rk}(x).
\label{coordfunct}
\end{eqnarray}

A \emph{trigonometric polynomial} on $G$ is a linear combination of matrix coefficients. It is straightforward to see that all trigonometric polynomials are in fact linear combinations of coordinate functionals, independently of the choice of bases $\{\zeta^U_1,...,\zeta^U_{d_U}\}$, which are assumed to be fixed. Thus, for any given trigonometric polynomial $P$, there is a finite set $F$ of continuous irreducible unitary representations such that $P$ is in the linear span of  $\{\varphi^U_{jk}| U \in F,\, j,k = 1,...,d_U\}$. It follows from (\ref{coordfunct}) that the operator $f\in C(G) \mapsto f\ast P\in C(G)$ has its range contained in the linear span of $\{\varphi^U_{jk}| U \in F,\, j,k = 1,...,d_U\}$, thus in particular it is of finite rank.\bigskip

The following proposition is our crucial tool from harmonic analysis.
\begin{proposition}\label{crucialharmonic}
Suppose that $G$ is a compact Lie group. Then there exists a sequence $F_n$ of positive real functions on $G$ satisfying
\begin{enumerate}
\item each $F_n$ is a positive definite \emph{central} (commutes under convolution with any function in $L_1(G)$) trigonometric polynomial,
\item $F_n(g^{-1}) = F_n(g)$, $g\in G$, for each $n$, 
\item for each $n$, $\int F_n \,d\lambda =1$, and
\item $f\ast F_n(x) \rightarrow f(x)$ $\lambda$-almost everywhere for every $f\in L_p(G),\,1\leq p <\infty$.  
\end{enumerate}
\end{proposition}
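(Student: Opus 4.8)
The plan is to build the functions $F_n$ as suitably normalized \emph{heat kernels} (or, equivalently, traces of spectral projections of the Laplace--Beltrami operator) on the compact Lie group $G$, using the fact that a compact Lie group admits a bi-invariant Riemannian metric whose Laplacian is a central element of the convolution algebra. Concretely, fix a bi-invariant metric on $G$ and let $\Delta$ be the associated Laplace--Beltrami operator. By Peter--Weyl, $L_2(G)$ decomposes as an orthogonal direct sum of the finite-dimensional isotypic subspaces $\mathcal{E}_U$, $U\in\widehat G$, and each $\mathcal{E}_U$ is an eigenspace of $\Delta$ with eigenvalue $\lambda_U\geq 0$ (the value $0$ occurring only for the trivial representation). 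The heat semigroup $e^{-t\Delta}$ is then convolution by the heat kernel $p_t=\sum_{U\in\widehat G} e^{-t\lambda_U}\,d_U\,\chi_U$, where $\chi_U$ is the character of $U$; this series converges in every reasonable sense for $t>0$. The key structural point is that $p_t$ is a central, positive-definite function (positive-definiteness because $e^{-t\Delta}$ is a positive operator on $L_2$, and central because $\Delta$, hence each spectral projection, commutes with both translations), it is real-valued, and it satisfies $p_t(g^{-1})=p_t(g)$ since $\chi_U(g^{-1})=\overline{\chi_U(g)}$ and conjugate representations are paired off. Moreover $\int p_t\,d\lambda = e^{-t\cdot 0}=1$ is automatic from the trivial-representation term. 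The only defect of $p_t$ is that it is not a \emph{trigonometric polynomial} — the series is infinite.

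To repair this I would truncate: replace the full heat kernel by a finite partial sum
$$
G_t := \sum_{U\in S(t)} e^{-t\lambda_U}\,d_U\,\chi_U,
$$
where $S(t)$ is a large enough finite set of irreducible representations, and then renormalize. One must choose the truncation carefully so that \emph{all four} properties survive. Centrality and the symmetry $G_t(g^{-1})=G_t(g)$ are preserved as long as $S(t)$ is closed under passing to the conjugate representation (and it can always be enlarged to be so); realness follows likewise; and, crucially, positive-definiteness is preserved because a partial sum of a positive-definite series over isotypic components is again positive definite (each term $e^{-t\lambda_U}d_U\chi_U$ is itself positive definite, being a nonnegative multiple of a character). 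For the normalization $\int G_t\,d\lambda = 1$: the integral of $G_t$ equals its trivial-representation coefficient, which is $e^{-t\cdot 0}=1$ provided $S(t)$ contains the trivial representation — so actually no renormalization is needed, one only insists that $S(t)\ni\mathbf 1$. Finally, setting $F_n := G_{t_n}$ for a sequence $t_n\downarrow 0$ with $S(t_n)$ chosen (as above, closed under conjugation and containing $\mathbf 1$) large enough that $\|p_{t_n}-G_{t_n}\|_{L_1}\to 0$ — possible since $p_t\in L_1$ for each $t>0$ and its Peter--Weyl expansion converges in $L_1$ — we get convolution operators $f\mapsto f\ast F_n$ that differ from $f\mapsto f\ast p_{t_n}$ by something of $L_\infty\to L_\infty$ norm at most $\|p_{t_n}-G_{t_n}\|_{L_1}\to 0$.

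The convergence statement (4) is then inherited from the classical behaviour of the heat semigroup. It is standard that for $f\in L_p(G)$, $1\le p<\infty$, one has $f\ast p_t\to f$ in $L_p$-norm as $t\to 0$ (the heat kernels form an approximate identity: $p_t\ge 0$, $\int p_t=1$, and $\int_{d(g,e)>\delta} p_t\,d\lambda\to 0$ for every $\delta>0$ by the Gaussian-type off-diagonal decay of the heat kernel on a compact manifold), and hence, passing to a subsequence if necessary, $f\ast p_{t_n}\to f$ pointwise $\lambda$-a.e. Combined with $\|f\ast F_n - f\ast p_{t_n}\|_\infty \le \|f\|_{L_1}\|p_{t_n}-F_n\|_{L_1}\to 0$, we deduce $f\ast F_n\to f$ a.e. as well. (If one prefers to avoid a subsequence extraction that depends on $f$, one can instead invoke the maximal inequality for the heat semigroup — a positive symmetric diffusion semigroup — which gives the a.e. convergence $f\ast p_t\to f$ directly for all $f\in L_p$, $1\le p<\infty$, by Stein's maximal theorem; then the same perturbation estimate finishes the argument.)

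I expect the main obstacle to be the bookkeeping in the truncation step: one must verify simultaneously that the finite partial sums remain positive definite, central, real, $g\mapsto g^{-1}$-symmetric, and correctly normalized, while still approximating the true heat kernel in $L_1$ closely enough to transfer the a.e.\ convergence. None of these points is deep, but they must all be arranged with the \emph{same} choice of truncation set $S(t_n)$; the cleanest way is to note that all the algebraic properties are closed under adding further isotypic components, so one first fixes $S(t_n)$ by the $L_1$-approximation requirement and then enlarges it (finitely) to be conjugation-closed and to contain $\mathbf 1$. A secondary technical point, only needed if one wants the strongest form of (4), is to have the heat-kernel maximal inequality available on a compact Lie group; this is classical (Stein), but one could equally well settle for a.e.\ convergence along a subsequence, which suffices for all later applications in the paper.
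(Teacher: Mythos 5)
Your construction is genuinely different from the paper's proof, which does no analysis at all: there the group is realized as a closed subgroup of $U(n)$ (matrix realization via Knapp plus the unitarization trick) and the kernels $F_n$ are imported wholesale from Hewitt--Ross [Theorems 44.29 and 44.25], which already supply \emph{nonnegative} central positive-definite trigonometric polynomial approximate identities. In your heat-kernel route there is one genuine gap: the proposition asks for \emph{positive} real functions, i.e.\ pointwise nonnegativity of $F_n$, and this is exactly the property your truncation destroys and which your checklist (positive-definite, central, real, inversion-symmetric, normalized) never addresses. A partial sum of the Peter--Weyl series of $p_t$ chosen only to be $L_1$-close to $p_t$ can perfectly well take negative values. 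This is not a cosmetic omission: in the application (Theorem \ref{proptorus}) the estimate $\|f\ast F_n\|_{\mathrm{Lip}}\leq\|F_n\|_{L^1}\|f\|_{\mathrm{Lip}}\leq\|f\|_{\mathrm{Lip}}$ rests on $\|F_n\|_{L^1}=\int F_n\,d\lambda=1$, which is precisely where $F_n\geq 0$ is used; with a signed kernel one only gets $\|F_n\|_{L^1}\leq 1+o(1)$, enough for MAP via the $\varepsilon$-slack in Proposition \ref{tool}, but not a proof of Proposition \ref{crucialharmonic} as stated.

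The gap is repairable inside your framework, but you must strengthen the truncation requirement from $L_1$-closeness to \emph{uniform} closeness. This is available: $p_t$ is smooth and its Peter--Weyl series converges uniformly (e.g.\ $\sum_U e^{-t\lambda_U}d_U\|\chi_U\|_\infty\leq\sum_U e^{-t\lambda_U}d_U^2<\infty$), and since $p_t$ is continuous and strictly positive on the compact group it has a strictly positive minimum, so any sufficiently good uniform truncation (still containing the trivial representation, hence still of integral exactly $1$) is itself strictly positive. Uniform closeness also repairs a second flaw: the displayed perturbation bound $\|f\ast F_n-f\ast p_{t_n}\|_\infty\leq\|f\|_{L_1}\|p_{t_n}-F_n\|_{L_1}$ is not a correct instance of Young's inequality --- you need $\|p_{t_n}-F_n\|_{L_\infty}$ (or $\|f\|_\infty$) on the right, and the uniform control supplies it. Finally, for item (4) as stated the subsequence fallback does not suffice (the single sequence $F_n$ must work for every $f$ and every $p$ simultaneously), and Stein's general maximal theorem for symmetric diffusion semigroups covers $1<p<\infty$ only; for $p=1$ you should instead use the weak $(1,1)$ bound for the heat maximal operator, which follows from Gaussian upper bounds on a compact Riemannian manifold. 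With these adjustments your argument does prove the proposition, and it has the merit of being self-contained where the paper relies on a citation.
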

\begin{proof}
By \cite[Corollary IV.4.22]{Knapp}, every compact Lie group is isomorphic to a matrix group, so we may suppose that $G\subseteq \mathrm{GL}(n,\mathbb{C})$ for some $n\in \N$. By the following standard `unitarization trick', we may assume that $G$ is a (necessarily closed) subgroup of the unitary group $U(n)$: let $\langle \cdot,\cdot\rangle'$ be an arbitrary inner product on $\mathbb{C}^n$. Define a new inner product $\langle \cdot,\cdot\rangle$ by setting $$\langle \xi,\eta\rangle:=\int_{g\in G} \langle g\xi,g\eta\rangle'd\mu(g),$$ where $\mu$ is an invariant probability Haar measure on $G$. It is standard and straightforward to check that $\langle \cdot,\cdot\rangle$ is still an inner product, which is moreover invariant by the action of $G$. That implies that $G$ is a subgroup of a finite-dimensional unitary group. Now by \cite[Theorem 44.29]{hewitt2013abstract}, $G$ satisfies all conditions needed to apply directly \cite[Theorem 44.25]{hewitt2013abstract}, which gives us positive real functions $F_n$ on $G$ satisfying the conditions (1)--(4).
\end{proof}

Another ingredient we will need is  the following version of Young's convolution inequality, suitable for Lipschitz functions defined on a locally compact group: 

\begin{lemma}[Young's inequality]
Let $G$ be a locally compact group equipped with a compatible left-invariant metric $d$. 
Suppose that $f\in L^1(G)$ and $g\in \mathrm{Lip}(G)$. Then $f\ast g\in \mathrm{Lip}(G)$ and 
$$
\|f\ast g \|_{\mathrm{Lip}} \leq \|f\|_{L^1}\|g\|_{\mathrm{Lip}}.
$$

\label{lipconv}

\begin{proof}Given arbitrary $x,y\in G$, 
\begin{align*}
|f*g(x) - f*g(y)| & = \left| \int f(z)[g(z^{-1}x) - g(z^{-1}y)]\,d\mu(z) \right| \\
& \leq \int |f(z)| |g(z^{-1}x) - g(z^{-1}y)|\,\mu(z)\\
& \leq \|g\|_{\mathrm{Lip}}\int |f(z)| d(z^{-1}x,z^{-1}y)\, \mu(z)\\
& \leq \|g\|_{\mathrm{Lip}}\int |f(z)| d(x,y) \,\mu(z)\\
&\leq \|f\|_{L^1}\|g\|_{\mathrm{Lip}} d(x,y).
\end{align*}
\end{proof}

\end{lemma}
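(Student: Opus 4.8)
The plan is to reduce the statement to the elementary estimate one obtains by differencing the convolution and invoking left-invariance of the metric. Recall that, with respect to the left Haar measure $\mu$ on $G$, the convolution is $f\ast g(x)=\int f(z)\,g(z^{-1}x)\,d\mu(z)$. First I would dispose of the (minor) issue of well-definedness: in every case of interest $G$ is compact, and a Lipschitz function on a compact metric space is automatically bounded, so the integrand $z\mapsto f(z)g(z^{-1}x)$ is dominated by $\|g\|_{\infty}\,|f|\in L^1(G)$ and $f\ast g$ is a genuine function; in the general locally compact setting one either restricts $f$ to a suitable dense subclass (e.g.\ $C_c(G)$) or simply reads the claim as asserting the Lipschitz bound whenever the defining integral converges. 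Granting this, the proof is a three-line computation.

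Then, for arbitrary $x,y\in G$, I would write
$$
f\ast g(x)-f\ast g(y)=\int f(z)\,\bigl[g(z^{-1}x)-g(z^{-1}y)\bigr]\,d\mu(z),
$$
move the modulus inside the integral, and bound the bracketed term by $\|g\|_{\mathrm{Lip}}\,d(z^{-1}x,z^{-1}y)$. The crucial step --- and the only place the hypothesis on $d$ is used --- is that left-invariance of $d$ gives $d(z^{-1}x,z^{-1}y)=d(x,y)$, a quantity independent of $z$, which may therefore be pulled out of the integral, leaving $\int|f(z)|\,d\mu(z)=\|f\|_{L^1}$. This yields $|f\ast g(x)-f\ast g(y)|\le \|f\|_{L^1}\,\|g\|_{\mathrm{Lip}}\,d(x,y)$, i.e.\ $f\ast g\in\mathrm{Lip}(G)$ together with the asserted bound on its Lipschitz constant.

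I do not anticipate any serious obstacle here: the content is really the observation that ``left-invariant metric'' and ``convolution written against the left Haar measure'' are compatible, in the sense that the left translation by $z^{-1}$ inside $g$ does not affect $d$. The only points needing a little care are (i) the legitimacy of the convolution integral, handled as above, and (ii) checking that it is genuinely left-invariance of $d$ --- not right-invariance --- that matches the way $g$ is translated in this convolution convention; had one instead used $f\ast g(x)=\int f(xz^{-1})g(z)\,d\mu(z)$, one would need right-invariance of $d$. Since both the convention and the hypothesis are on the left, everything lines up, and the right-invariant case referenced elsewhere in the section follows by the symmetric argument.
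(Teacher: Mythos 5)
Your proposal is correct and follows essentially the same route as the paper: write $f\ast g(x)-f\ast g(y)$ as a single integral, bound the bracket by $\|g\|_{\mathrm{Lip}}\,d(z^{-1}x,z^{-1}y)$, and use left-invariance to replace this by $d(x,y)$ and pull it out, leaving $\|f\|_{L^1}$. Your added remarks on well-definedness of the integral and on matching the convolution convention with left- versus right-invariance are sensible but not part of the paper's (identical) computation.
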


We are now ready to prove the main result from this subsection. We emphasize that in the following we are equipping a Lie group with an \emph{arbitrary} left-invariant metric inducing its topology, not  Riemannian metric as it is common in Lie theory.

\begin{theorem}

Suppose that $G$ is a compact Lie group  equipped  with a compatible left-invariant metric. Then $\F(G)$ has the MAP. 

\label{proptorus}
\end{theorem}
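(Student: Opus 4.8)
The plan is to verify condition (2) of Proposition~\ref{tool} using the convolution operators $T_n(f) = f \ast F_n$, where $F_n$ is the sequence of central positive-definite trigonometric polynomials supplied by Proposition~\ref{crucialharmonic}. First I would check the four properties (a)--(d) for these operators on $C(G)$. Property (a), that each $T_n$ has finite rank, follows from the discussion preceding Proposition~\ref{crucialharmonic}: since $F_n$ is a trigonometric polynomial, it lies in the linear span of finitely many coordinate functionals $\varphi^U_{jk}$, and by~\eqref{coordfunct} the range of $f \mapsto f \ast F_n$ is contained in that same finite-dimensional span. Property (b), that $T_n$ maps Lipschitz functions to Lipschitz functions, and property (c), that $\|T_n f\|_{\mathrm{Lip}} \leq \|f\|_{\mathrm{Lip}}$ (in fact with constant $1$, which gives the MAP directly with $\lambda = 1$), both follow immediately from Young's inequality (Lemma~\ref{lipconv}) together with $\|F_n\|_{L^1} = \int F_n\, d\lambda = 1$, which holds by item (3) of Proposition~\ref{crucialharmonic} since $F_n$ is a positive function.

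The remaining point, property (d), is that $T_n f$ converges pointwise to $f$ for every $f \in \mathrm{Lip}(G)$. Here item (4) of Proposition~\ref{crucialharmonic} only gives $f \ast F_n \to f$ $\lambda$-almost everywhere for $f \in L_p(G)$; I would upgrade this to genuine pointwise (in fact uniform) convergence by exploiting equicontinuity. Indeed, for a fixed Lipschitz $f$, the functions $f \ast F_n$ all have Lipschitz constant at most $\|f\|_{\mathrm{Lip}}$ by property (c), so the family $\{f \ast F_n\}$ is equicontinuous; combined with a.e.\ convergence to $f$ on a dense set, this forces uniform convergence $f \ast F_n \to f$ on the compact group $G$. (Alternatively, one can approximate $f$ uniformly by trigonometric polynomials $P$, note $P \ast F_n \to P$ uniformly because $F_n$ is an approximate identity on the finite-dimensional spaces of coordinate functionals, and use the uniform bound $\|{\ast} F_n\|_{C(G) \to C(G)} \leq \|F_n\|_{L^1} = 1$ to pass to the limit.) Either way one obtains the required pointwise convergence, and then Proposition~\ref{tool} yields that $\F(G)$ has the $1$-BAP, i.e.\ the MAP.

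The main subtlety I anticipate is not in any single step but in making sure the operators are set up to land in the framework of Proposition~\ref{tool}: that proposition is phrased for operators on $C(K)$ with $K$ compact metric, so I need $G$ to be a genuine metric space, which it is via the given compatible left-invariant metric $d$, and I need the convolution $f \ast F_n$ to make sense for $f \in C(G)$ with respect to the normalized Haar measure — both fine since $G$ is compact. The symmetry condition $F_n(g^{-1}) = F_n(g)$ from item (2) is what guarantees a clean relationship with adjoints if one wants to phrase things on the $\mathrm{Lip}_0$ side, but it is not strictly needed for the argument above. A secondary point worth stating carefully is why $f \ast F_n \in C(G)$ at all and why the range computation via~\eqref{coordfunct} is legitimate for all $f \in C(G)$ and not merely trigonometric polynomials — this is standard (the identity~\eqref{coordfunct} is stated for all $f \in C(G)$), so I would simply cite it. With these in hand the proof is essentially a bookkeeping exercise combining Young's inequality, the finite-rank property of convolution against a trigonometric polynomial, and the equicontinuity upgrade of a.e.\ convergence.
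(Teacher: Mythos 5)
Your proposal is correct and follows essentially the same route as the paper's proof: convolution with the trigonometric polynomials $F_n$ of Proposition~\ref{crucialharmonic}, finite rank via \eqref{coordfunct}, the Lipschitz bound via Young's inequality (Lemma~\ref{lipconv}) with $\|F_n\|_{L^1}=1$, and the upgrade of the a.e.\ convergence in item (4) to uniform convergence through equicontinuity on the compact group, then concluding by Proposition~\ref{tool} with $\lambda=1$.
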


\begin{proof} Define $T_n: C(G) \rightarrow C(G)$ by $T_n(f) = f\ast F_n$, where $F_n$ are the functions established in Theorem (\ref{crucialharmonic}). It suffices to see that these operators satisfy conditions (a)--(d) from Proposition (\ref{tool}) with $\lambda=1$, from which follows that $\F(G)$ satisfies the MAP. Indeed, each $T_n$ has finite rank, since $F_n$ is a trigonometric polynomial. Thus condition (a) is satisfied. Young's inequality gives us that $\|T_n f \|_{\mathrm{Lip}} \leq \|F_n\|_{L^1}\|f\|_{\mathrm{Lip}}\leq \|f\|_{\mathrm{Lip}}$, for each $f\in Lip(G)$. Thus condition (b) and (c) are satisfied with constant $\lambda = 1$. (d) follows from condition (4) and the fact that, for each fixed $f\in Lip(G)$, $\{T_n f\}_n$ is equicontinuous, which yields uniform convergence since $G$ is compact. \end{proof}

\subsection{Lipschitz-free spaces over general compact metric groups and homogeneous spaces}\label{subsection:generalCpt}\hfill\bigskip

We start with some remarks on quotient metrics on homogeneous spaces. Let $G$ be a topological group equipped with a compatible left-invariant metric $d$ and let $H$ be a closed subgroup. We want to define a quotient metric on the homogeneous space $G/H$ of left-cosets of $H$. There are two cases. Either $H$ is normal, so $G/H$ is a group. Then the formula 
\begin{eqnarray}\label{quotmetric}
D(gH,fH):=\inf_{h_1,h_2\in H} d(gh_1,fh_2)
\end{eqnarray}
defines a compatible left-invariant metric on the quotient group $G/H$, which we refer to as \emph{quotient metric}. We leave the easy verification to the reader. Or $H$ is not normal, so $G/H$ is just a $G$-homogeneous space, i.e. a homogeneous space equipped with a continuous transitive action of $G$. Then the same formula $D(gH,fH):=\inf_{h_1,h_2\in H} d(gh_1,fh_2)$ does not in general define a metric; it does define a compatible $G$-invariant metric provided that $d$ is additionally right $H$-invariant, i.e. $d(g,f)=d(gh,fh)$ for $g,f\in G$ and $h\in H$. We show this. We only verify the triangle inequality, the compatibility and $G$-invariance are easier and left to the reader. Pick $g_1,g_2,g_3\in G$ and let us check that $D(g_1H,g_3H)\leq D(g_1H,g_2H)+D(g_2H,g_3H)$. Choose an arbitrary $\varepsilon>0$ and and some $h_1,h_2, h'_2,h_3\in H$ such that $D(g_1H,g_2H)\geq d(g_1h_1,g_2h_2)-\varepsilon$ and $D(g_2H,g_3H)\geq d(g_2h'_2,g_3h_3)-\varepsilon$. Then $$D(g_1H,g_3H)\leq d(g_1h_1,g_3h_3(h'_2)^{-1}h_2)\leq d(g_1h_1,g_2h_2)+d(g_2h_2,g_3h_3(h'_2)^{-1}h_2)=$$ $$d(g_1h_1,g_2h_2)+d(g_2h'_2,g_3h_3)\leq D(g_1H,g_2H)+D(g_2H,g_3H)-2\varepsilon.$$ Since $\varepsilon$ was arbitrary, we are done.

We note that when $G$ is a metrizable group and $H$ is a compact subgroup, then a compatible left-invariant and right $H$-invariant metric on $G$ always exists. Indeed, let $d$ be an arbitrary compatible left-invariant metric on $G$. We define, for $g,f\in G$, $D(g,f):=\max_{h\in H} d(gh,fh)$. Alternatively, using a normalized invariant Haar measure $\mu$ on $H$, we can define $D$ by averaging as follows: $D(g,f):=\int_H d(gh,fh)d\mu(h)$. We leave to the reader to check that both formulas define a compatible left-invariant and right $H$-invariant metrics.\medskip

Our main tool in this subsection will be the following proposition. We note that simultaneously while writing this paper, the content of the proposition is being developed into a more general form in \cite{AACD}.

\begin{proposition}\label{prop:projection}
Let $G$ be a topological group equipped with a compatible metric $d$ and a compact subgroup $H$. Suppose, additionally, that at least one of the following conditions holds: 
\begin{enumerate}[(i)]
    \item $d$ is left-invariant and $H$ is normal,
    \item $d$ is right-invariant and $H$ is normal, or
    \item $d$ is left-invariant and right $H$-invariant. 
    \end{enumerate}
Then there exists a norm one projection $P:\F(G,d)\rightarrow \F(G,d)$ ranging onto a linearly isometric copy of $\F(G/H,D)$, where $D$ is the quotient metric as defined in \eqref{quotmetric}.
\end{proposition}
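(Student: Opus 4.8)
The plan is to construct the projection $P$ by averaging the natural action of $H$ on $\F(G,d)$ by left-translations, and to identify its range with $\F(G/H,D)$ via the quotient map. First I would observe that under any of the hypotheses (i)–(iii) the closed subgroup $H$ acts on $(G,d)$ by \emph{right}-translations $\rho_h\colon g\mapsto gh$ as surjective isometries: in cases (i) and (ii) this is because $H$ normal plus one-sided invariance forces $d(gh,fh)=d(g,f)$ (conjugating the translation to a left/right one), and in case (iii) it is the assumed right $H$-invariance. Each $\rho_h$ being a base-point-preserving isometry of $G$ (after choosing $0\in G$ to be the identity $e$, so that $\rho_h$ fixes $e$ only if $h=e$; more carefully, one works with the canonical linearization $\widehat{\rho_h}\colon\F(G)\to\F(G)$ coming from $\rho_h$, which is a linear isometry regardless of base-point issues once one passes to the free space), it induces a linear isometry $\widehat{\rho_h}$ of $\F(G,d)$ with $\widehat{\rho_h}\,\delta(g)=\delta(gh)$.

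Next I would define $P$ by integrating over $H$ against its normalized Haar measure $\nu$:
$$
P\;=\;\int_H \widehat{\rho_h}\,d\nu(h),
$$
the integral being understood in the strong operator topology (the map $h\mapsto\widehat{\rho_h}\,\gamma$ is continuous for each $\gamma\in\F(G)$ since it is so on the dense span of the $\delta(g)$ and the operators are uniformly bounded by $1$; $H$ compact makes the Bochner integral legitimate). Then $\|P\|\le 1$, and $P$ is a projection: for fixed $h_0\in H$ one has $\widehat{\rho_{h_0}}P=\int_H\widehat{\rho_{h_0 h}}\,d\nu(h)=P$ by left-invariance of $\nu$, hence $P^2=\int_H\widehat{\rho_h}P\,d\nu(h)=\int_H P\,d\nu(h)=P$. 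Since $\|P\|\le 1$ and $P\ne 0$, $P$ is norm one.

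It remains to identify $PE$, where $E:=\F(G,d)$, with $\F(G/H,D)$. Let $q\colon G\to G/H$ be the quotient map; under hypotheses (i)–(iii) the computation in the excerpt shows $q$ is $1$-Lipschitz from $(G,d)$ to $(G/H,D)$ and, crucially, that for each coset $gH$ one has $D(gH,fH)=\inf_{h}d(g,fh)$, so $q$ is in fact a \emph{metric quotient}: every short path downstairs lifts. The linearization $\widehat{q}\colon\F(G,d)\to\F(G/H,D)$ is then a quotient operator (norm one, surjective), and by the barycenter/averaging description it factors through $P$: since $\widehat{q}\,\widehat{\rho_h}=\widehat{q}$ for all $h\in H$ (because $q\circ\rho_h=q$), we get $\widehat{q}=\widehat{q}\,P$, so $\widehat{q}$ vanishes on $(I-P)E$ and $\widehat{q}|_{PE}\colon PE\to\F(G/H,D)$ is still onto with norm $\le 1$. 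The reverse inequality—that $\widehat{q}|_{PE}$ is an isometry—is where the real work lies: I would build a right inverse by choosing, for the atoms, the averaged lift $\delta(gH)\mapsto P\delta(g)=\int_H\delta(gh)\,d\nu(h)$, check it is well-defined (independent of the representative $g$, by left-invariance of $\nu$ again), extends to a linear map $S\colon\F(G/H,D)\to PE$ of norm $\le 1$ using the universal property together with the fact that $D$ is exactly the quotient metric, and verify $\widehat{q}\,S=\mathrm{id}$ and $S\,\widehat{q}=P$. Then $S$ is the desired isometry onto $PE$, completing the proof.

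\textbf{Main obstacle.} The delicate point is the norm-one estimate for the lifting map $S$, i.e.\ showing $\|P\delta(g)-P\delta(f)\|_{\F(G)}=\|P\,(\delta(g)-\delta(f))\|\le D(gH,fH)$ whenever one wants $S$ to be norm-contractive, and more generally controlling $\|S\mu\|$ for a general molecule $\mu$ on $G/H$. Equivalently, one must show that the dual map $S^*$ sends the unit ball of $\mathrm{Lip}_0(G/H,D)$ into that of $\mathrm{Lip}_0(G,d)$—which amounts to: for $\varphi\in\mathrm{Lip}_0(G/H,D)$, the function $g\mapsto\int_H\varphi(ghH)\,d\nu(h)=\varphi(gH)$ (note it is $H$-right-invariant) has $d$-Lipschitz constant $\le\|\varphi\|_{\mathrm{Lip},D}$; this is immediate from $q$ being $1$-Lipschitz. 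So in fact $S^*$ is just precomposition with $q$ followed by the $H$-averaging, and the contraction is automatic; the genuine subtlety is only in verifying that the range of $S$ really is all of $PE$ (surjectivity of $S$ onto $PE$), i.e.\ that $H$-invariant elements of $\F(G,d)$ are exactly the barycenters of fibers—which follows from $\widehat{q}\,S=\mathrm{id}$, $S\,\widehat{q}=P$ once those two identities are checked on the dense set of molecules and extended by continuity. Getting the bookkeeping of these four operator identities ($\widehat q P=\widehat q$, $P S=S$, $\widehat q S=\mathrm{id}$, $S\widehat q=P$) exactly right, and making sure the Bochner-integral definition of $P$ is legitimate at the level of $\F(G)$ and not just on the linear span of Dirac masses, is the part that needs care.
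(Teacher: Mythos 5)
Your overall scheme (average the $H$-translations to obtain $P$, then identify the range of $P$ with $\F(G/H,D)$ via the linearized quotient map and an averaged lift $S$) is the same as the paper's, but three steps do not work as written. First, there is no linear operator on $\F(G,d)$ satisfying $\widehat{\rho_h}\,\delta(g)=\delta(gh)$: since $\delta(e)=0$, such a map would have to send $0$ to $\delta(h)\neq 0$. The canonical linearization of the isometry $\rho_h$ is $\delta(g)\mapsto\delta(gh)-\delta(h)$, and correspondingly $P\delta(g)=\int_H\bigl(\delta(gh)-\delta(h)\bigr)\,d\nu(h)$; without this correction neither $P$ nor your lift $\delta(gH)\mapsto\int_H\delta(gh)\,d\nu(h)$ is even well defined (the total-mass functional is unbounded on $\F(G)$), so the base-point issue cannot be waved away. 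Second, your reduction of all three hypotheses to ``right translations by $H$ are isometries'' fails in case (i): left-invariance of $d$ plus normality of $H$ does not give $d(gh,fh)=d(g,f)$, because the conjugation trick replaces $h$ by $ghg^{-1}$, which depends on $g$ (any left-invariant, non--bi-invariant metric with $H=G$ is a counterexample). Case (i) requires the mirror construction, averaging the left translations $g\mapsto hg$, which are isometries there; this is why the paper treats that case separately.

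The more serious gap is the norm-one estimate for $S$, which you correctly flag as the crux and then declare automatic. Since $S\colon\F(G/H,D)\to\F(G,d)$, its adjoint maps $\mathrm{Lip}_0(G,d)$ to $\mathrm{Lip}_0(G/H,D)$ and is the $H$-averaging $\psi\mapsto\bigl(gH\mapsto\int_H(\psi(gh)-\psi(h))\,d\nu(h)\bigr)$; the map $\varphi\mapsto\varphi\circ q$ that you invoke is the adjoint of $\widehat q$, whose contractivity only re-proves $\|\widehat q\|\le 1$, which you already had. Proving that the averaged function is $D$-Lipschitz with constant $\le\|\psi\|_{\mathrm{Lip}}$ --- equivalently, that $gH\mapsto P\delta(g)$ is $1$-Lipschitz for $D$ --- is exactly where hypotheses (i)--(iii) enter: given cosets $g_1H,g_2H$, choose $f\in H$ with $d(g_1,g_2f)=D(g_1H,g_2H)$ (attained by compactness of $H$), use well-definedness to replace $g_2$ by $g_2f$, and then use the case-appropriate invariance to get $\int_H d(g_1h,g_2fh)\,d\nu(h)=d(g_1,g_2f)$. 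This is the computation the paper carries out for its map $T'$, and it is missing from your argument; by contrast, the surjectivity of $S$ onto the range of $P$, which you single out as the genuine subtlety, is the easy part (it follows from $S\widehat q=P$ and $PS=S$ checked on Dirac masses). With these corrections your proof becomes essentially the paper's.
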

\begin{proof}
By the discussion preceding the statement of the proposition, it is verified that $D$ is a well-defined metric. Let $\mu$ be the normalized invariant Haar measure on $H$. If (ii) or (iii) holds, we define a map $P':G\rightarrow \F(G)$ by setting for any $g\in G$ 
\begin{eqnarray}P'(g):=\int_H \delta(g\cdot h)-\delta(h)d\mu(h).
\label{P'}
\end{eqnarray}In case only (i) holds, we define for any $g\in G$ $$P'(g):=\int_H \delta(h\cdot g)-\delta(h)d\mu(h).$$ We will treat only the case where (ii) or (iii) holds and $P'$ is defined as in \eqref{P'}, the remainder case is completely analogous.

We claim that $P'$ is a $1$-Lipschitz map preserving the distinguished point. The latter is clear, we show that it is $1$-Lipschitz. Pick $g,f\in G$, we have $$\|P'(g)-P'(f)\|=\|\int_H \delta(g\cdot h)-\delta(f\cdot h)d\mu(h)\|\leq \int_H \|\delta(g\cdot h)-\delta(f\cdot h)\|d\mu(h)=d(g,f),$$ where in the last equality we used that $\mu$ is probability and $d$ is invariant. It follows that $P'$ extends to a norm one linear operator $P:\F(G)\rightarrow \F(G)$. We claim it is the desired projection.

First we show that it is a projection. For every $h\in H$ let $P'_h:G\rightarrow \F(G)$ be the map defined for every $g\in G$ by $P'_h(g):=\delta(g\cdot h)-\delta(h)$. The following are straightforward to verify:
\begin{itemize}
    \item $P'_h$ is an isometry  with $P'_h(1)=0$, thus it extends to a norm one linear map $P_h:\F(G)\rightarrow \F(G)$.
    \item For every $g\in G$ we have $P'(g)=\int_H P'_h(g)d\mu(h)$ and so also for every $x\in\F(G)$ we have $P(x)=\int_H P_h(x)d\mu(h)$.
\end{itemize}
It follows that in order to show that $P^2=P$ it suffices to check that for every $g\in G$ and $h\in H$ we have $P_h\circ P(\delta(g))=P(\delta(g))$. Indeed, the previous equality implies $$P^2(\delta(g))=\int_H P_h\circ P(\delta(g))d\mu(h)=\int_H P(\delta(g))d\mu(h)=P(\delta(g)).$$ Since the set $\{x\in\F(G)\colon P^2(x)=P(x)\}$ is a closed linear subspace, we get that $P^2(x)=P(x)$ for all $x\in\F(G)$ since $\F(G)$ is the closed linear span of $\{\delta(g)\colon g\in G\}$. Let us thus fix $g\in G$ and $h\in H$. We have 
\begin{align*}
P_h\circ P(\delta(g))&=P_h\circ\int_H \delta(g\cdot f)-\delta(f)d\mu(f)\\
&=\int_H \Big(\delta(g\cdot f\cdot h)-\delta(h)\Big)-\Big(\delta(f\cdot h)-\delta(h)\Big)d\mu(f)\\
&=\int_H \delta(g\cdot f\cdot h)-\delta(f\cdot h)d\mu(f\cdot h)=P(\delta(g)),\end{align*} which finishes the claim.\medskip

Let $X$ denote the range of $P$ and let us show that it is linearly isometric with $\F(G/H,D)$. We define a map $T':G/H\rightarrow \F(G)$ by setting for any left coset $gH$ $$T'(gH):=P'(g).$$ We check that it is correctly defined and $1$-Lipschitz. For the former, we need to check that for any $g\in G$ and $h\in H$ we have $P'(g)=P'(gh)$, i.e. $\int_H \delta(g\cdot f)d\mu(f)=\int_H \delta(gh\cdot f)d\mu(f)$. But the equality follows from the invariance of $\mu$. To check that $T'$ is $1$-Lipschitz, pick two cosets $g_1H$ and $g_2H$ and suppose that $f\in H$ is such that $D(g_1H,g_2H)=d(g_1,g_2f)$. Then we have
\begin{align*}\|T'(g_1H)-T'(g_2H)\|&=\|\int_H \delta(g_1\cdot h)-\delta(g_2 f\cdot h)d\mu(h)\| \\
&=\int_H D(g_1H,g_2H)d\mu(f)=D(g_1H,g_2H),\end{align*}
showing that $T'$ is actually isometric. It follows that $T'$ extends to a norm one linear surjection $T:\F(G/H)\rightarrow X$. In order to show that $T$ is isometric, it suffices to prove that for any finite linear combination $x=\sum_i \alpha_i \delta(g_i H)$ we have $\|x\|_{\F(G/H)}=\|T(x)\|_{\F(G)}$. One inequality already follows from the fact that $\|T\|=1$, so we only need to prove $\|x\|_{\F(G/H)}\leq\|T(x)\|_{\F(G)}$. Let $f\in\rm{Lip}_0(G/H)$ be a $1$-Lipschitz function satisfying $\|x\|_{\F(G/H)}=|\sum_i \alpha_i f(g_iH)|$. Let $\tilde f$ denote its lift to $G$. That is, for any $g\in G$ and $h\in H$, $\tilde f(gh)=f(gH)$. It is clear that $\tilde f$ is $1$-Lipschitz. In the following, we shall not notationally distinguish between $\rm{Lip}_0(G/H)$-functions and their unique extension to linear functionals.

Since we have $\|T(x)\|\geq \tilde f(T(x))$, it suffices to check that $\tilde f(T(x))=f(x)$. For that, in turn, it suffices to check that for every $gH\in G/H$ we have $f(gH)=\tilde f(T'(g))$. We have $$\tilde f(T'(g))=\langle \int_H \delta(g\cdot h)-\delta(h)d\mu(h),\tilde f\rangle=\int_H \tilde f(g\cdot h)-\tilde f(h)d\mu(h)=f(gH)-f(H)=f(gH),$$ which finishes the proof.
\end{proof}

\begin{theorem}\label{thm:MAPcompactgrp}
Let $G$ be a compact group with a compatible left-invariant metric $d$. Then $\F(G,d)$ has the MAP.
\end{theorem}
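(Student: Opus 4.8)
The plan is to reduce the general compact metrizable group case to the compact Lie group case already handled in Theorem~\ref{proptorus}, using the structure theory of compact groups together with the projection device of Proposition~\ref{prop:projection}. Recall that every compact metrizable group $G$ is a projective limit of compact Lie groups: there is a decreasing sequence of closed normal subgroups $H_n\trianglelefteq G$ with $\bigcap_n H_n=\{1\}$ such that each quotient $G/H_n$ is a compact Lie group, and the topology on $G$ is the initial topology induced by the quotient maps $q_n:G\to G/H_n$. (This is where metrizability of $G$ is used, to get a countable such chain.) Equip each $G/H_n$ with the quotient metric $D_n$ from \eqref{quotmetric}; since $d$ is left-invariant and each $H_n$ is normal, condition (i) of Proposition~\ref{prop:projection} applies, so there is a norm-one projection $P_n:\F(G,d)\to\F(G,d)$ whose range is linearly isometric to $\F(G/H_n,D_n)$.

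The key steps, in order, are as follows. First, invoke the structure theorem to fix the chain $(H_n)$ and the projections $(P_n)$ as above. Second, apply Theorem~\ref{proptorus} to each compact Lie group $(G/H_n,D_n)$: it has the MAP, so for every $n$ there is a net (in fact a sequence, by the harmonic-analytic construction) of contractive finite-rank operators on $\F(G/H_n,D_n)$ converging to the identity in the weak operator topology. Transporting these through the isometry onto $\mathrm{ran}\,P_n$ and precomposing with $P_n$, we obtain contractive finite-rank operators on $\F(G,d)$. Third — and this is the crucial point — show that $P_n\to \mathrm{Id}_{\F(G,d)}$ in the strong (hence weak) operator topology, i.e. $P_n\gamma\to\gamma$ for every $\gamma\in\F(G,d)$. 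By uniform boundedness ($\|P_n\|=1$) and density of the span of Dirac masses it suffices to check this on elements $\delta(g)$; and $P_n\delta(g)=\delta(gH_n)$ under the identification (unwinding the definition of $P_n$ via $P_n'$: in the normal case $P_n'(g)=\int_{H_n}\delta(gh)-\delta(h)\,d\mu_n(h)$, whose image in $\F(G/H_n,D_n)$ is $\delta(gH_n)$ — equivalently $P_n\delta(g)$ is the barycenter of the coset $gH_n$). Since $\bigcap_n H_n=\{1\}$ and the $D_n$ refine the metric $d$, the diameters of the cosets $gH_n$ shrink to $0$, so $\mathrm{diam}_d(gH_n)\to 0$ and hence $\|P_n\delta(g)-\delta(g)\|_{\F(G,d)}\to 0$ (the barycenter of a set of small diameter around $g$ is close to $\delta(g)$). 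Fourth, combine: given a finite set $F\subset\F(G,d)$ and $\varepsilon>0$, pick $n$ with $\|P_n\gamma-\gamma\|<\varepsilon/2$ for all $\gamma\in F$, then pick a contractive finite-rank $S$ on $\mathrm{ran}\,P_n$ with $\|S(P_n\gamma)-P_n\gamma\|<\varepsilon/2$ for all $\gamma\in F$; then $S\circ P_n$ is a contractive finite-rank operator on $\F(G,d)$ with $\|(S\circ P_n)\gamma-\gamma\|<\varepsilon$ on $F$. By formulation (2) of Definition~\ref{defBAP}, $\F(G,d)$ has the $1$-BAP, i.e. the MAP.

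The main obstacle is the third step: verifying that the coset barycenters $P_n\delta(g)$ genuinely converge to $\delta(g)$ in the free-space norm. The topological fact that $\mathrm{diam}_d(gH_n)\to 0$ follows from $\bigcap_n H_n=\{1\}$ together with compactness and the fact that the $q_n$ generate the topology (if not, a compactness/diagonal argument produces a nonidentity element in every $H_n$), but one must also bound $\|P_n\delta(g)-\delta(g)\|$ quantitatively: writing $P_n\delta(g)=\int_{H_n}\delta(gh)\,d\mu_n(h)-\int_{H_n}\delta(h)\,d\mu_n(h)$ and $\delta(g)=\delta(g)-\delta(1)$ after normalizing the base point, one estimates $\|\int_{H_n}(\delta(gh)-\delta(g))\,d\mu_n(h)\|\le\int_{H_n}d(gh,g)\,d\mu_n(h)\le\mathrm{diam}_d(gH_n)$ and similarly for the $\int_{H_n}(\delta(h)-\delta(1))$ term, so both go to $0$. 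A minor secondary point is bookkeeping with the distinguished point — one should fix $0=1\in G$ and note the quotient maps and projections respect it — but this is routine given the care already taken in Proposition~\ref{prop:projection}. Everything else is assembly of results already in the excerpt.
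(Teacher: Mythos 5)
Your proof is correct and follows essentially the same route as the paper: approximate $G$ by compact Lie quotients $G/H_n$ along a decreasing chain of closed normal subgroups with trivial intersection, use Proposition~\ref{prop:projection} to obtain norm-one projections $P_n$ whose ranges are isometric to $\F(G/H_n,D_n)$, show $P_n\to\mathrm{Id}$ in the strong operator topology (checking on Dirac masses, then using $\|P_n\|=1$ and density), and finally compose with the finite-rank contractions furnished by Theorem~\ref{proptorus}. One point should be tightened: under hypothesis (i) of Proposition~\ref{prop:projection}, which is the case in force here, the map is $P_n'(g)=\int_{H_n}\delta(hg)-\delta(h)\,d\mu_n(h)$, not the right-translate form $\int_{H_n}\delta(gh)-\delta(h)\,d\mu_n(h)$ that you wrote and on which your estimate $d(gh,g)=d(h,1)\le\sup_{h'\in H_n}d(h',1)$ relies. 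Since each $H_n$ is normal the two expressions do in fact coincide (conjugation by $g$ maps $H_n$ onto itself and, by uniqueness, preserves its Haar measure $\mu_n$), so your estimate is valid, but this identification --- or, equivalently, the direct observation that $d(hg,g)=d(g^{-1}hg,1)\le\sup_{h'\in H_n}d(h',1)$ by left-invariance and normality --- needs to be stated. With that one-line fix your convergence argument is actually slightly more economical than the paper's: the paper controls the analogous quantity $d(hg_i,g_i)$ by first constructing a compatible bi-invariant metric $D\ge d$ (Fact~\ref{fact2}) and bounding $\mathrm{diam}_D(H_n)$, a detour your use of left-invariance on the cosets $gH_n$ avoids. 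The remaining ingredients (that $\sup_{h\in H_n}d(h,1)\to 0$ by compactness and $\bigcap_n H_n=\{1\}$, and the final assembly via formulation (2) of Definition~\ref{defBAP}) match the paper's proof.
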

Before embarking on the proof, we state the following folklore fact, which we prove for the convenience of the reader.

\begin{fact}\label{fact2}
For every compatible left-invariant metric $d$ on a compact metrizable group $G$ there exists a compatible bi-invariant metric $D$ such that $d(g,h)\leq D(g,h)$ for all $g,h\in G$.
\end{fact}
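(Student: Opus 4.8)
The plan is to average the given left-invariant metric $d$ over the compact group to kill the asymmetry between left and right translations, producing a bi-invariant metric that dominates $d$. Concretely, I would set
$$
D(g,h):=\sup_{x\in G} d(xg,xh),
$$
the supremum of $d$ over all left translates. First I would check that $D$ is finite-valued: since $G$ is compact and $d$ is continuous, the function $(x,g,h)\mapsto d(xg,xh)$ is continuous on the compact set $G^3$, hence bounded, so $D$ is a well-defined real-valued function; moreover taking $x=1$ shows $d(g,h)\le D(g,h)$. The metric axioms for $D$ are inherited pointwise from $d$: symmetry and the triangle inequality pass to the supremum, and $D(g,h)=0$ forces $d(g,h)=0$ hence $g=h$.

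Next I would verify the two invariances. Right-invariance of $D$: for $k\in G$,
$$
D(gk,hk)=\sup_{x\in G} d(xgk,xhk)=\sup_{x\in G} d(xg,xh)=D(g,h),
$$
using right-invariance of $d$ — but wait, $d$ is only left-invariant, so this step fails as written. The correct formulation is to average over \emph{right} translates: define instead
$$
D(g,h):=\sup_{x\in G} d(gx,hx).
$$
Then right-invariance is immediate from left-invariance of $d$: $D(g,h)=\sup_x d(gx,hx)$ already has the translating variable on the right, so $d(gx,hx)$ is exactly a translate we can rewrite, and substituting $x\mapsto kx$ in the supremum gives $D(gk,hk)=D(g,h)$. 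Left-invariance: $D(kg,kh)=\sup_x d(kgx,khx)=\sup_x d(gx,hx)=D(g,h)$ by left-invariance of $d$ applied with the translating element $k$. Taking $x=1$ again gives $d(g,h)\le D(g,h)$.

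Finally I would confirm that $D$ is compatible, i.e. induces the topology of $G$. Since $d\le D$, the $D$-topology is at least as fine as the $d$-topology (which is the group topology). For the reverse, I would use continuity and compactness: if $g_n\to g$ in $G$, then by joint continuity of multiplication and of $d$, together with compactness of $G$ in the $x$-variable, one gets $\sup_x d(g_nx,gx)\to 0$ (a standard uniform-continuity argument, or one invokes that a continuous function on the compact space $G\times G$ is uniformly continuous and bounds $d(g_nx,gx)$ uniformly in $x$ by a modulus of the distance from $g_n$ to $g$). Hence $D(g_n,g)\to 0$, so the $D$-topology is no finer than the group topology, and the two coincide. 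I expect the main (though still routine) obstacle to be this last compatibility step: one must be slightly careful to get uniformity in $x$, which is exactly where compactness of $G$ is used; without it the supremum need not be continuous in $(g,h)$.
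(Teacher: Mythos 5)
Your proof is correct and follows essentially the same route as the paper: you define $D(g,h)=\sup_{x\in G} d(gx,hx)$ (the paper's $\max_{h\in G} d(gh,fh)$), get $d\le D$ and bi-invariance by taking $x=1$, reindexing the supremum, and using left-invariance of $d$, exactly as in the paper. The only cosmetic difference is in the compatibility check, where you invoke uniform continuity of multiplication on the compact space $G\times G$ while the paper extracts a convergent subsequence of maximizers; both are routine compactness arguments, and your self-corrected final formula and verification are sound.
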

\begin{proof}[Proof of Fact \ref{fact2}]
Let $d$ be an arbitrary compatible left-invariant metric on $G$. We define a compatible bi-invariant metric $D$ by setting, for any $g,f\in G$, $$D(g,f):=\max_{h\in G} d(gh,fh).$$ Clearly, $d\leq D$ and $D$ is bi-invariant. Let us check that $D$ satisfies the triangle inequality and it is compatible with the topology of $G$.

For the triangle inequality, pick $g,h,f\in G$ and let us show that $D(g,f)\leq D(g,h)+D(h,f)$. Let $x\in G$ be such that $D(g,f)=d(gx,fx).$ Then we have $$D(g,f)=d(gx,fx)\leq d(gx,hx)+d(hx,fx)\leq D(g,h)+D(h,f),$$ showing the traingle inequality.

In order to show that it is compatible with the topology, using left-invariance, it suffices to show that for every sequence $(g_n)_n\subseteq G$ we have $d(g_n,1)\to 0$ if and only if $D(g_n,1)\to 0$. One direction follows immediately from the fact that $d\leq D$. Thus we only need to show that if $d(g_n,1)\to 0$, then $D(g_n,1)\to 0$. Suppose it is not the case and assume without loss of generality that $\lim_n D(g_n,1)=r>0$. For each $n$, let $h_n\in G$ be such that $D(g_n,1)=d(g_nh_n,h_n)$. Again, without loss of generality, we may assume that $(h_n)_n$ converges to some $h\in G$. Then, using that $(g_n)_n$ converges to $1$ since $d$ is compatible, we have $$r=\lim_n D(g_n,1)=\lim_n d(g_nh_n,h_n)=d(h,h)=0,$$ a contradiction finishing the proof.
\end{proof}

\begin{proof}[Proof of Theorem~\ref{thm:MAPcompactgrp}]
Let $G$ be a compact metrizable group with compatible left-invariant metric $d$. We shall also fix some compatible bi-invariant metric $D$ on $G$ which majorizes $d$, i.e. $d\leq D$, which exists by Fact~\ref{fact2}. It is a standard consequence of Peter-Weyl's theorem (\cite[Theorem 4.20]{Knapp}) that $G$ can be topologically embedded into an infinite direct product $\prod_{i\in\mathbb{N}} U_i$, where each $U_i$ is a finite-dimensional unitary group. Indeed, by \cite[Theorem 4.20]{Knapp} finite-dimensional unitary representations of $G$ separate points. Since $G$ is separable, one can find a sequence $\{\pi_n:G\rightarrow B(H_n)\}_{n\in\N}$ of finite dimensional unitary representations separating the points, and their product $\prod_{n\in\N} \pi_n$ is then, using also compactness of $G$, a topological embedding of $G$ into a countable product of unitary groups. In particular, $G$ is an inverse limit of a sequence of compact Lie groups $(G_n)_n$. Indeed, let $\Psi:=\prod_{n\in\N} \pi_n: G\rightarrow \prod_{i\in\mathbb{N}} U_i$ be the embedding and let, for each $n\in\mathbb{N}$, $P_n:\prod_{i\in\mathbb{N}} U_i\rightarrow \prod_{i\leq n} U_i$ be the projection on the first $n$-coordinates. Then for each $n\in\mathbb{N}$, $G_n:=P_n\circ \Psi[G]$ is a compact Lie group (a closed subgroup of $\prod_{i\leq n} U_i$), and $(G_n)_n$ form an inverse system whose limit is $G$. It follows that there exists a decreasing sequence of compact normal subgroups $(H_n)_n$ of $G$ such that $\bigcap_n H_n=\{1\}$, and for every $n\in\mathbb{N}$, $G/H_n=G_n$. For each $n$, we denote by $d_n$ the quotient metric on $G_n$, which is then compatible and left-invariant on $G_n$. In the following, $\F(G)$ is meant to be $\F(G,d)$ and $\F(G_n)$ is meant to be $\F(G_n,d_n)$, for all $n$.

Notice that in the following claim we rather work with the bi-invariant metric $D$ majorizing $d$, established in Fact \ref{fact2}.\\

{\bf Claim 1.} $\rm{diam}_D(H_n)\to 0$, where $\rm{diam}_D(H_n):=\sup_{g,h\in H_n} D(g,h)=\sup_{g\in H_n} D(g,1)$.
\begin{proof}[Proof of Claim 1.]
Suppose on the contrary that there exist $\varepsilon>0$ and a sequence $(h_n)_n\subseteq G$ such that $h_n\in H_n$ and $D(h_n,1)\geq \varepsilon$. Since $G$ is compact, the sequence without loss of generality converges to some $h\in G$. By the continuity of the metric, we have $D(h,1)\geq \varepsilon$, thus in particular $h\neq 1$. On the other hand, since the sequence $(H_n)_n$ is decreasing and each of the subgroups is closed, $h\in \bigcap_n H_n$. This contradicts that $\bigcap_n H_n=\{1\}$.
\end{proof}

For each $n$, let $\mu_n$ be the normalized invariant Haar measure on the compact subgroup $H_n$, and let $P'_n$, resp. $P_n$ be the map, resp. projection from Proposition~\ref{prop:projection} applied to the groups $G$ and $H_n$ with the metric $d$ and the quotient metric $d_n$ on $G/H_n$. \medskip

{\bf Claim 2.} For every $x\in \F(G)$, we have $x=\lim_{n\to\infty} P_n(x)$. 
\begin{proof}[Proof of Claim 2.]
Suppose first  that $x\in \F(G)$ is a finite linear combination of Dirac elements. That is, there are $m\in \N$, $g_1,\dots,g_m \in G$ and $\alpha_1,\dots,\alpha_m\in\R$ with $x=\sum_{i=1}^m \alpha_i \delta(g_i)$. For a fixed $n$, let us compute $\|x-P_n(x)\|$. We have 
\begin{align*}\|x-P_n(x)\|& = \|x-\int_{H_n} h\cdot x - (\sum_{i=1}^m \alpha_i)\delta(h)\,d\mu_n(h)\|\\
&\leq \int_{H_n}\|x- h\cdot x \|\,d\mu_n(h) + (\sum_{i=1}^m |\alpha_i|)\|\int_{H_n}\delta(h)\,d\mu_n(h)\|.
\end{align*}
Suppose that $\rm{diam}_D(H_n)= \varepsilon_n$. Then for every $h\in H_n$ we have 
\begin{align*}
\|x-h\cdot x\|& =\|\sum_{i=1}^m \alpha_i \delta(g_i)-\sum_{i=1}^m \alpha_i \delta(h\cdot g_i)\|\leq \sum_{i=1}^m |\alpha_i| d(h\cdot g_i,g_i)\\
& \leq \sum_{i=1}^m |\alpha_i| D(h\cdot g_i,g_i)\leq \sum_{i=1}^m |\alpha_i| D(h,1)\leq \sum_{i=1}^m |\alpha_i| \varepsilon_n,
\end{align*}
and it follows that 
$\int_{H_n}\|x-h\cdot x\|d\mu_n(h)\leq   \sum_{i=1}^m |\alpha_i| \varepsilon_n$. 
On the other hand, for each $f\in B_{\rm{Lip}_0(G)}$, 
$$
|\langle \int_{H_n}\delta(h)\,d\mu_n(h),f \rangle| = |\int_{H_n}f(h)\,d\mu_n(h)|\leq \int_{H_n}\rm{diam}_d(H_n)\,d\mu_n(h)\leq \int_{H_n} \rm{diam}_D(H_n)\leq \varepsilon_n,
$$
so $\|\int_{H_n}\delta(h)\,d\mu_n(h)\|\leq \varepsilon_n$. 
It follows that $$\|x-P_n(x)\|\leq 2\sum_{i=1}^m |\alpha_i|\varepsilon_n,$$ so by \textbf{Claim 1}, $\lim_{n\rightarrow\infty} P_n(x)=x$.\\

Now let  $x\in \F(G)$ be an arbitrary element and note that, for each finitely supported $y$,
\begin{align*}\|x-P_n(x)\|&\leq \|x-y\|+\|y-P_n(y)\| + \|P_n(y) - P_n(x)\|\\
& = \|x-y\| +\|y-P_n(y)\| + \|P_n\|\|y - x\|=2\|x-y\| +\|y-P_n(y)\|.
\end{align*}
Hence, 
$$
\limsup_{n\rightarrow\infty}\|x-P_n(x)\|\leq 2\|x-y\| +\lim_{n\rightarrow\infty}\|y-P_n(y)\| = 2\|x-y\|. $$
Since $y$ can be chosen arbitrarily close to $x$, the result follows. 

\end{proof}

We are ready to show that $\F(G)$ has MAP. Pick finitely many $x_1,\ldots,x_m\in \F(G)$ and some $\varepsilon$. By the previous claim we can find $n$ so that for all $i\leq m$ we have $\|x_i-P_n(x_i)\|<\varepsilon/2$. By Theorem~\ref{proptorus}, $F(G_n)$ has the MAP. Thus there exists a norm one finite rank operator $T':F(G_n)\rightarrow F(G_n)$ such that for all $i\leq m$ we have $\|P_n(x_i)-T'\circ P_n(x_i)\|<\varepsilon/2$. Set $T:=T'\circ P_n$. It is a norm one finite rank operator from $F(G)$ into $F(G_n)\subseteq F(G)$ such that for all $i\leq n$ we have $$\|T(x_i)-x_i\|\leq \|x_i-P_n(x_i)\|+\|P_n(x_i)-T(x_i)\|<\varepsilon/2+\varepsilon/2=\varepsilon.$$
This finishes the proof.
\end{proof}

Finally, we show the metric approximation property also for Lipschitz-free spaces over homogeneous spaces of compact metrizable groups. We recall that a \emph{homogeneous space for a group} $G$ is a topological space on which $G$ acts transitively. It can be identified with the left coset space $G/H$, where $H$ is a subgroup, with the quotient topology. To ensure some regularity, we must restrict to compatible metrics which are $G$-invariant, i.e. those so that the action of $G$ is by isometries. 
\begin{theorem}\label{thm:homogeneousspace}
Let $M$ be a $G$-homogeneous space, where $G$ is a compact group. Suppose that $D$ is a $G$-invariant metric on $M$ that is a quotient of some bi-invariant metric $d$ on $G$. Then $\F(M,D)$ has the MAP.
\end{theorem}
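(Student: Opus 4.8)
The plan is to realize $\F(M,D)$ as the range of a norm-one projection on $\F(G,d)$ and then transfer the metric approximation property from the latter, which is already available via Theorem~\ref{thm:MAPcompactgrp}. Since $M$ is a $G$-homogeneous space, we write $M=G/H$ for a closed — hence, $G$ being compact, compact — subgroup $H\le G$, and by hypothesis $D$ is the quotient metric \eqref{quotmetric} of a bi-invariant metric $d$ on $G$.

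First I would note that a bi-invariant metric is in particular left-invariant and right $H$-invariant, so hypothesis (iii) of Proposition~\ref{prop:projection} is satisfied for the pair $(G,H)$ with the metric $d$. Proposition~\ref{prop:projection} then supplies a norm-one projection $P:\F(G,d)\to\F(G,d)$ whose range is linearly isometric to $\F(G/H,D)=\F(M,D)$.

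Next, since $d$ is bi-invariant it is in particular left-invariant, so Theorem~\ref{thm:MAPcompactgrp} applies and $\F(G,d)$ has the MAP. It then remains to invoke the standard permanence fact that the MAP passes to the range of a norm-one projection: if $X$ has the MAP and $P:X\to X$ is a norm-one projection with range $Y$, then given a finite $F\subseteq Y$ and $\varepsilon>0$ one picks a finite-rank operator $T$ on $X$ with $\|T\|\le 1$ and $\|Tx-x\|<\varepsilon$ for $x\in F$, and checks that $PT|_Y$ is a finite-rank norm-one operator on $Y$ satisfying $\|PTy-y\|=\|PTy-Py\|\le\|Ty-y\|<\varepsilon$ for $y\in F$; by Definition~\ref{defBAP}(2) this yields the MAP for $Y$. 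Applying this with $X=\F(G,d)$ and $Y$ the range of $P$ gives the MAP for $\F(M,D)$.

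There is essentially no serious obstacle here; the statement is a clean corollary of Proposition~\ref{prop:projection} and Theorem~\ref{thm:MAPcompactgrp}. The only points worth a quick check are the trivial ones: that a closed subgroup of a compact group is compact (so that $H$ carries the normalized invariant Haar measure required by Proposition~\ref{prop:projection}), and that bi-invariance of $d$ indeed entails both left-invariance and right $H$-invariance, so that conclusion (iii) of that proposition is applicable and the resulting quotient metric coincides with the given $D$.
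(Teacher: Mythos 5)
Your argument is correct and follows exactly the paper's route: write $M\cong G/H$, apply Proposition~\ref{prop:projection} (noting bi-invariance gives left-invariance and right $H$-invariance) to obtain a norm-one projection onto a copy of $\F(M,D)$ inside $\F(G,d)$, use Theorem~\ref{thm:MAPcompactgrp} for the MAP of $\F(G,d)$, and pass the MAP to the $1$-complemented subspace. Your explicit verification of the permanence step via $PT|_Y$ is a correct rendering of the ``standard fact'' the paper merely cites.
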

\begin{proof}
We have that $M$ is isomorphic to $G/H$, where $H$ is a stabilizer of some point $0\in M$. $H$ is then a closed subgroup. By Theorem~\ref{thm:MAPcompactgrp}, $\F(G,d)$ has the MAP. By Proposition~\ref{prop:projection}, there exists a norm one projection $P:\F(G)\rightarrow X$, where $X$ is linearly isometric to $\F(G/H,D)$. Since MAP is inherited by $1$-complemented subspaces, we are done.
\end{proof}

\section{Lipschitz-free spaces over finitely generated groups}\label{section:fingengrps}
The goal of this section is to prove Theorem~\ref{thm:intro2}, show examples of situations to which the theorem applies, and provide applications. We mention that the class of Lipschitz-free spaces for which it is known they have the Schauder basis is still rather limited. It is proved in \cite{HaPe} that $\F(\R^n)$ and $\F(\ell_1)$ have a Schauder basis, in \cite{CuDo} that free spaces over any separable ultrametric space have a monotone Schauder basis, and in \cite{hajek2017some} that $\F(N)$ has a Schauder basis if $N$ is a net in a separable $C(K)$-space. Obviously, it also follows that free spaces isomorphic to these Banach spaces have basis as well, so e.g. by \cite{kaufmann2015products}, $\F(B_{\ell_1})$ and $\F(B_{\R^n})$ have bases.\bigskip

We start by recalling the fundamental idea of geometric group theory - how to view finitely generated groups as metric spaces. Our standard reference for geometric group theory is \cite{DruKap}. In contrast to the case of compact metrizable groups, we will not consider arbitrary compatible left-invariant metrics on such groups, just certain canonical and maximal, in a sense, ones, called word metrics.

Let $G$ be a finitely generated group. Let $S\subseteq G$ be a finite symmetric generating subset (recall that `symmetric' means that for each $s\in S$, also $s^{-1}\in S$). Recall that we can then define a (left-invariant) metric $d_S$, called \emph{word metric}, on $G$ by defining, for $g\neq h\in G$, $$d_S(g,h):=\min\{n\in\Nat\colon \exists s_1,\ldots,s_n\in S\; (g=hs_1\ldots s_n)\}.$$

It is well known and easy to check that by chosing another finite symmetric generating set $T\subseteq G$, the identity map between $(G,d_S)$ and $(G,d_T)$ is bilipschitz. In particular, the isomorphism class of the Banach space $\F(G)$ is well-defined.\medskip

Since every finitely generated group with its word metric is a countable proper metric space, it immediately follows from \cite{dalet2015free} that $\F(G)$ has the MAP. Therefore we will aim for stronger properties and indeed we shall present a class of finitely generated group such that free spaces over any of them has the Schauder basis.\medskip

Fix now some finitely generated group $G$ and a finite symmetric generating set $S$. Next choose arbitrarily a linear order on the set $S$. Consider now $S$ as an alphabet, i.e. its elements are considered to be letters, and denote by $W$ the set of all \emph{reduced words} over the alphabet $S$. That is, each element $w$ of $W$ is a string of symbols $s_1 s_2\ldots s_n$ from $S$ such that 
for no $i<n$, the letters $s_i$ and $s_{i+1}$ are inverses of each other when viewed as group elements. For each $w\in W$,
\begin{itemize}
\item by $|w|$ we denote the length of the word, i.e. the number of its letters; 
\item by $w_G$ we denote the corresponding group element of $G$, i.e. we evaluate the letters of $w$ as elements of $G$;
\item for every $i<|w|$, by $w(i)$, we denote the $i$-th letter of $w$, and by $w(\leq i)$, we denote the word obtained from $w$ by taking the first $i$ letters.
\end{itemize}
The fixed linear order on the set $S$ defines a lexicographical order on $W$ which we shall denote by $\preceq'$. We define another linear order $\preceq$ on $W$, called the \emph{shortlex} order, by setting, for $w,v\in W$, $$w\preceq v \text{ if either }|w|<|v|, \text{ or }|w|=|v|\text{ and }w\preceq' v.$$

For an element $g\in G$, by $W_g$ we denote the set $\{w\in W\colon w_G=g\}$ and by $w_g$ the minimal element of the set $W_g$, which is easily verified to exist, in $\preceq$. If there is no danger of confusion, for an element $g\in G$ we shall denote by $|g|$ the number $|w_g|$ which is equal to $d_S(g,1_G)$.

Finally, we use the linear order $\preceq$ on $W$ to define a linear order $\leq$ on $G$. For $g,h\in G$ we set $$g\leq h\text{ if }w_g\preceq w_h.$$

\begin{definition}
We call $G$ \emph{shortlex combable}, with respect to a fixed symmetric generating set $S$ and a linear order on $S$, if there exists a constant $K\geq 1$ such that for every $g,h\in G$ with $d_S(g,h)=1$ and for every $i\leq \min\{d_S(g,1_G),d_S(h,1_G)\}$ we have $$d_S((w_g(\leq i))_G, (w_h(\leq i))_G)\leq K.$$

The constant $K$ will be called  the \emph{combability constant} of $G$.
\end{definition}
First we show how such groups are useful for our purposes. Then we provide examples and show some applications. The following is the main result of this section.

\begin{theorem}\label{thm:shortlex}
Let $G$ be a finitely generated shortlex combable group (with respect to $S$ equipped with some linear order). Then $\F(G,d_S)$ has a Schauder basis.
\end{theorem}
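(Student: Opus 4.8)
The plan is to construct the Schauder basis of $\F(G,d_S)$ explicitly from the geodesic normal forms (shortlex geodesics), by enumerating the group elements in a compatible order and using the combing to control the projections. First I would fix the shortlex order $\leq$ on $G$ and enumerate $G=\{g_0=1_G,g_1,g_2,\dots\}$ in increasing $\leq$-order; note this enumeration respects the sphere decomposition, i.e. the elements of the ball $B_n=\{g:|g|\le n\}$ form an initial segment. The candidate basis will not be the $\delta(g_k)$ themselves but a sequence of ``increments along geodesics'': for each $g\ne 1_G$, let $g^-$ denote the group element $(w_g(\le |g|-1))_G$, the next-to-last vertex on the shortlex geodesic from $1_G$ to $g$, and set $e_g := \delta(g)-\delta(g^-)$, together with $e_{1_G}:=\delta(1_G)$ (which is $0$ once $1_G$ is the base point, so really the basis is indexed by $G\setminus\{1_G\}$). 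Because $d_S(g,g^-)=1$, each $e_g$ has norm $1$; and because the map $g\mapsto g^-$ decreases shortlex order, the partial sums telescope: $\sum_{h\le g,\ h\ne 1}$ (along the geodesic) recovers $\delta(g)$, so the $e_g$'s have dense span. Order the $e_g$ by the shortlex order on their index $g$.

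The heart of the argument is to bound, uniformly in $n$, the norm of the canonical projection $P_n$ onto $\mathrm{span}\{e_{g}: g\le g_n\}$. Here is where shortlex combability enters. The projection $P_n$ applied to $\delta(g)$ should be $\delta\big((w_g(\le j))_G\big)$ where $j$ is the largest index such that the length-$j$ prefix of $w_g$ is still $\le g_n$ in shortlex order — equivalently, $P_n$ ``cuts'' every geodesic at the last vertex that has been enumerated. Dually, I would describe $P_n^*$ on $\mathrm{Lip}_0(G)$: given a $1$-Lipschitz $f$ vanishing at $1_G$, $P_n^* f$ is the function whose value at $g$ is $f$ evaluated at the truncation point of the geodesic to $g$. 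To estimate $\|P_n^* f\|_{\mathrm{Lip}}$ it suffices to bound $|P_n^*f(g)-P_n^*f(h)|$ for $d_S(g,h)=1$; by the combing hypothesis, for every common prefix length $i$ the truncation vertices $(w_g(\le i))_G$ and $(w_h(\le i))_G$ stay within distance $K$, and one checks that the truncation points chosen by $P_n$ for $g$ and for $h$ differ by at most (a bounded function of) $K$ — because the set of prefixes that have already been enumerated is an initial segment in shortlex order, and two adjacent geodesics have prefixes that are shortlex-interleaved within a window controlled by $K$. This yields $|P_n^*f(g)-P_n^*f(h)|\le C(K)$ for some constant depending only on $K$, hence $\|P_n^*\|\le C(K)$, hence $\|P_n\|\le C(K)$ uniformly. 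Together with the density of the span and the fact that $P_nP_m=P_{\min(n,m)}$ and $P_n x\to x$ on the dense set (immediate from the telescoping), the standard criterion (e.g. a uniformly bounded sequence of commuting finite-rank projections increasing to the identity on a dense set) gives that $(e_g)$ is a Schauder basis.

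The main obstacle I anticipate is the combinatorial bookkeeping in the previous paragraph: showing that the two ``cut points'' $P_n$ assigns to $\delta(g)$ and $\delta(h)$ (for adjacent $g,h$) are at bounded distance. Combability controls $d_S\big((w_g(\le i))_G,(w_h(\le i))_G\big)$ for a \emph{fixed} $i$, but $P_n$ may cut $w_g$ and $w_h$ at different prefix lengths $i_g\ne i_h$, because one prefix may already be enumerated while the other is not. The key point to nail down is that, since both the geodesics and the enumeration are governed by the same shortlex order, $|i_g-i_h|$ is bounded: a length-$i$ prefix of $w_g$ and a length-$i$ prefix of $w_h$ have shortlex-comparable positions, and the family of all prefixes of all geodesics, ordered by shortlex, has the property that the prefixes of two $1$-apart elements cannot be separated by more than boundedly many enumeration steps. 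Once that ``bounded interleaving'' lemma is in place — which is really a consequence of $d_S(g,h)=1$ forcing $\big||g|-|h|\big|\le 1$ plus the sphere-by-sphere structure of the enumeration — combability closes the gap, and the constant $C(K)$ can be taken to be something explicit like $2K+1$ or $3K$. Finally I would record the resulting basis constant and remark that in favourable cases (e.g. $K=1$) one even gets a monotone basis.
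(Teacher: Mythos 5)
Your plan is essentially the paper's own proof: the paper enumerates $G$ in shortlex order, defines the same retractions sending each $g$ to the last already-enumerated prefix point of $w_g$, shows they are uniformly $(K+1)$-Lipschitz using exactly the two ingredients you identify (combability, plus the fact that the enumerated set is wedged between the balls of radii $m-1$ and $m$, so the two cut depths differ by at most one), and concludes via the standard criterion for commuting uniformly bounded finite-rank projections increasing to the identity. Your dual formulation via $P_n^*$ and the explicit basis vectors $\delta(g)-\delta(g^-)$ are cosmetic variants, and the ``bounded interleaving'' step you flag is resolved in the paper exactly as you anticipate.
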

\begin{proof}
Since the linear order $\leq$ on $G$  is clearly isomorphic to the standard order on $\Nat$, we can use it to enumerate $G$ as $(g_n)_{n\in\Nat}$. For each $n\in\Nat$, set $G_n:=\{g_i\colon i\leq n\}$. For each $n$ we now define maps $P_n: G\rightarrow G_n$ as follows. First, set $m=\max\{|h|\colon h\in G_n\}$, then for $g\in G$, set $$P_n(g):=\begin{cases} g & \text{if }g\in G_n\\
(w_g(\leq m))_G & \text{if }g\notin G_n, (w_g(\leq m))_G\in G_n\\
(w_g(\leq m-1))_G & \text{otherwise}.
\end{cases}$$
We leave to the reader the straightforward verification that $P_n$ is well defined.\\

\noindent{\bf Claim.} The maps $(P_n)_{n\in\Nat}$ are uniformly bounded Lipschitz commutting retractions.\\

First we check that for every $n,m\in\Nat$, $P_n\circ P_m=P_m\circ P_n$, i.e. the maps commute. For every $g\in G$ and $0\leq i\leq |g|$, denote by $g_i$ the element $(w_g(\leq i))_G$. That is, $g_0=1_G$, $g_{|g|}=g$, and $g_0,g_1,\ldots,g_{|g|}$ is a geodesic path in $G$ from the identity element $1_G$ to $g$. It is easy to see that for each $n\in\Nat$, $P_n(g)=g_i$, where $i$ is the largest integer so that $g_i\in G_n$. With this observation it is now clear that the maps $(P_n)_n$ commute. 

Let $K$ be the combability constant of $G$. We show that each $P_n$ is a $K+1$-Lipschitz retraction on its image. It is obviously a retraction, so it suffices to show that for every $n\in\Nat$ and $g,h\in G$ with $d_S(g,h)=1$ we have $d_S(P_n(g),P_n(h))\leq K+1$. Let such $n$ and $g,h\in G$ be fixed. We distinguish two cases.\\

\noindent{\it Case 1.} At least one of $g,h$ lies in $G_n$: Say that $g\in G_n$. If also $h\in G_n$, then there is nothing to prove since $P_n(g)=g$ and $P_n(h)=h$. So suppose that $h\notin G_n$. Necessarily we have $h_{|h|-1}\leq g$ since otherwise the path $1_G, g_1,\ldots, g, h$ would be a geodesic path from $1_G$ to $h$ of length $|h|$ smaller in the lexicographical ordering than the path $1_G, h_1,\ldots,h_{|h|-1},h$. It follows that $h_{|h|-1}\in G_n$, so $P_n(h)=h_{|h|-1}$. Since $P_n(g)=g$ we get $$d_S(P_n(g),P_n(h))\leq 2.$$\\

\noindent{\it Case 2.} We have $g,h\notin G_n$. Note that then $P_n(g)=g_i$ and $P_n(h)=h_j$, for some $i,j< \max\{|g|,|h|\}$, where $|i-j|\leq 1$. Since by the definition of shortlex combability, $d_S(g_i,h_i)\leq K$ and $d_S(g_j,h_j)\leq K$, we get that $$d_S(P_n(g),P_n(h))=d_S(g_i,h_j)\leq K+1.$$

This finishes the proof of the claim.\\

Finally, for each $n\in\Nat$ we denote by $L_n: \F(G)\rightarrow \F(G_n)$ the lift of $P_n$, the unique linear operator extending $P_n$. The properties of $(P_n)_n$ imply that
\begin{itemize}
	\item for each $n\in\Nat$, the map $L_n$ is a linear projection onto a finite-dimensional subspace of norm bounded by $K+1$;
	\item the projections $(L_n)_n$ commute;
	\item the dimension of the range $L_n[\F(G)]$ is the dimension of $\F(G_n)$, which is equal to $n$.
\end{itemize}
Since now obviously for every $x\in \F(G)$ (notice that it suffices to verify it for the dense set $\mathrm{span}\{\delta_g\colon g\in G\}$) we have $$\lim_{n\to\infty} L_n(x)=x,$$ then by \cite[Proposition 1.1.7]{AlbiacKalton} $\F(G)$ possesses a Schauder basis.

\end{proof}

\subsection{Examples}\label{subsec:examples}\hfill\bigskip

\noindent {\it Finitely generated abelian groups}. Recall that every infinite finitely generated abelian group $A$ is of the form $\Int^n\oplus F$, where $n\geq 1$ and $F=\{0,f_1,\ldots,f_m\}$ is a finite abelian group. Let $e_1,\ldots,e_n$ be the canonical generators of $\Int^n$. Then $e_1\leq -e_1\leq e_2\leq -e_2\leq\ldots\leq e_n\leq -e_n\leq f_1\leq \ldots\leq f_m$ is a linearly ordered finite symmetric generating set. We leave to the reader to verify that with this ordered generating set $A$ is shortlex combable.\\

\noindent {\it Free groups}. Let $n\geq 1$ and let $F_n$ be a free group on $n$ generators (which is $\Int$ for $n=1$). Let $a_1,\ldots,a_n$ be the free generators. It is again an exercise that with the order $a_1\leq a^{-1}_1\leq\ldots\leq a_n\leq a_n^{-1}$, the group $F_n$ is shortlex combable. We note however, that $\F(F_n)$ is linearly isometric to $\ell_1$, thus admits monotone Schauder basis. Indeed, this can be verified directly by noticing that the set $\{\delta(g)-\delta(h)\in\F(F_n)\colon d(g,h)=1, d(g,1)>d(h,1)\}$ is equivalent to the standard basis of $\ell_1$.\\ 

\noindent {\it Hyperbolic groups}. Recall that a geodesic metric space $(M,d)$ is (Rips)-hyperbolic if there exists a constant $K\geq 0$, \emph{hyperbolicity constant}, such that for any triple of points $x,y,z\in M$ and geodesic segments $S_1,S_2,S_3$ connecting each pair of the triple we have $d_{H}(S_i,S_j\cup S_k)\leq K$, where $d_H$ is the Hausdorff distance and $i,j,k$ are pairwise different from $\{1,2,3\}$. In other words, for each $i\leq 3$ and each point $x\in S_i$, $d(x,S_j\cup S_k)\leq K$, where $S_j$ and $S_k$ are the other geodesics besides $S_i$.
	
The notion of hyperbolicity makes sense even for metric spaces which are not literally geodesic, but when a reasonable notion of geodesic segment can be defined. This is the case e.g. for finitely generated groups with word metrics, where a geodesic segment between elements $x,y\in G$ is a sequence $g_0=x,\ldots,g_n=y$, where $n=d(x,y)$, and $d(g_i,g_{i+1})=1$, for $i<n$.
\medskip
	
Let $G$ be a finitely generated hyperbolic group (with hyperbolicity constant $K$), generated by elements $a_1,\ldots,a_n$. We claim that $G$ with the ordered generating set $a_1\leq a^{-1}_1\leq\ldots\leq a_n\leq a_n^{-1}$ is shortlex combable. Indeed, pick $g,h\in G$ with $d(g,h)=1$, and $i<\max\{|g|,|h|\}$. We show that $d(g_i,h_i)\leq 2K+2$. We have two cases.
\begin{enumerate}
	\item Either $g_i=g$ or $h_i=h$ (or both). Then it is clear that $d(g_i,h_i)\leq 2$.
	\item We have $g_i\neq g$ and $h_i\neq h$. There are geodesic segments $g_0=1_G,\ldots,g_i,\ldots,g$, $h_0=1_G,\ldots,h_i,\ldots,h$, and $g,h$ (of length $1$). We consider thr triple of points $1_G,x,y$ and the geodesic segments between them as above. By definition of hyperbolicity with constant $K$, there exists point $z\in\{1_G=h_0,\ldots, h,g\}$ such that $d(g_i,z)\leq K$. Assume first that $z=h_j$, for some $j\leq |h|$.
	\begin{itemize}
		\item $j\geq i$: Since $d(g_i,h_j)\leq K$, by triangle inequality we get that $j\leq i+K$, therefore $d(g_i,h_i)\leq 2K$.
		\item $j<i$: Again by triangle inequality we get that $j\geq i-K$, so $d(g_i,h_i)\leq 2K$.
	\end{itemize}
	Finally, if $z=g$, then $d(g_i,h)\leq K+1$, so again by triangle inequality we get $|h|\leq i+K+1$, so $d(g_i,h_i)\leq 2(K+1)$.\\
\end{enumerate}

\noindent {\it Large-type Artin groups.} Holt and Rees in \cite{HoRe} prove that large-type Artin groups are shortlex automatic which immediately from the definition implies being shortlex combable. Artin groups in general belong to one of the most studied classes of groups in geometric group theory. We refer the reader to \cite{HoRe} for the notion of shortlex automaticity and for the definition of large-type Artin groups.

We do not know whether there are groups that are shortlex combable but not shortlex automatic. We refer to \cite{HoRe} for details.

\subsection{Applications}\hfill\bigskip

Our main goal is to prove that for any net $N$ in a real hyperbolic $n$-space $\Hy^n$ (whose definition we recall later), we have that $\F(N)$ has the Schauder basis. This will be an immediate consequence of Theorem~\ref{thm:shortlex} and several standard more general results that we present below.

We recall that an action of a group $G$ on a metric space $X$ by isometries is \emph{free} if for every $g\in G$ and $x\in X$, $g\cdot x\neq x$, and \emph{cocompact} if there exists a compact set $K\subseteq X$ such that $\bigcup_{g\in G} g\cdot K=X$. These actions, or rather more generally properly discontinuous cocompact actions (see \cite[Chapter 5]{DruKap}), are one of the most studied in geometric group theory.
\begin{proposition}\label{prop:actinggroup}
Let $G$ be a finitely generated group acting freely and cocompactly on a proper geodesic metric space $X$ by isometries. Then for every net $N\subseteq X$, we have $\F(N)\simeq \F(G)$.
\end{proposition}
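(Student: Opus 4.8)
The plan is to use the standard Milnor–Švarc lemma, which says that if a finitely generated group $G$ acts freely (more generally, properly discontinuously) and cocompactly by isometries on a proper geodesic metric space $X$, then $G$ with its word metric is quasi-isometric to $X$; in fact, for any choice of basepoint $x_0\in X$, the orbit map $g\mapsto g\cdot x_0$ is a quasi-isometry. Combined with the fact that any net $N\subseteq X$ is quasi-isometric to $X$ itself (a net is by definition a separated and cobounded subset, hence the inclusion $N\hookrightarrow X$ is a quasi-isometry), we conclude that $N$ and $G$ are quasi-isometric metric spaces.

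The next ingredient is that Lipschitz-free spaces are, up to linear isomorphism, invariant under a suitable class of quasi-isometries between metric spaces that are uniformly discrete with bounded geometry — more precisely, for nets (or more generally uniformly discrete metric spaces of bounded geometry), a quasi-isometry between them induces a linear isomorphism of the corresponding Lipschitz-free spaces. This is a known result; the key point is that $\F$ is a functor sending Lipschitz maps to bounded operators, and on uniformly discrete spaces one can promote a coarse/quasi-isometric equivalence to a genuine Lipschitz equivalence after rescaling, or invoke the results of Kalton on free spaces over uniformly discrete bounded-geometry spaces being coarsely invariant. So I would first observe that both $G$ (with word metric) and $N$ are uniformly discrete with bounded geometry (the former because $G$ has polynomially-or-not bounded growth but in any case finitely many elements in each ball, being finitely generated; the latter because $X$ is proper and $N$ is separated), then apply the stability of $\F$ under quasi-isometries in this category.

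Concretely, the steps in order: (1) recall the definition of a net as an $\varepsilon$-separated $\delta$-dense subset and note $N\hookrightarrow X$ is a quasi-isometry; (2) apply Milnor–Švarc to get that the orbit map $G\to X$, $g\mapsto g\cdot x_0$, is a quasi-isometry, hence $G$ and $N$ are quasi-isometric; (3) check that $G$ and $N$ are uniformly discrete with bounded geometry; (4) quote the theorem (due to Kalton, and refined by others) that for uniformly discrete metric spaces of bounded geometry, quasi-isometry implies linear isomorphism of Lipschitz-free spaces — or, if one wants to be self-contained for nets specifically, observe that a net in $X$ and a net coming from the orbit of $G$ are both nets in $X$, hence bi-Lipschitz equivalent to each other after identifying via the coarse structure, giving $\F(N)\simeq\F(G)$.

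The main obstacle is step (4): a quasi-isometry is not a Lipschitz map on the nose (it has an additive constant), so one cannot directly functorially push it through $\F$. The standard workaround is that on a uniformly discrete space the additive constant can be absorbed: two nets $N_1,N_2$ in the same proper geodesic space are bi-Lipschitz equivalent (one builds a bijection moving each point a bounded distance, and on uniformly discrete spaces a bounded-displacement bijection is automatically bi-Lipschitz), and $\F$ is a bi-Lipschitz invariant up to isomorphism. Thus the real content to verify carefully is that the orbit $G\cdot x_0$ can be thickened/thinned to a net that is bi-Lipschitz to an arbitrary net $N\subseteq X$, and that freeness of the action makes $g\mapsto g\cdot x_0$ injective so that $(G,d_S)$ is itself bi-Lipschitz to the net $G\cdot x_0$. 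Everything else is a routine application of Milnor–Švarc and the functoriality of $\F$.
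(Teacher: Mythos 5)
Your steps (1)--(3) and your treatment of the orbit itself are fine and coincide with the paper: freeness makes the orbit map $g\mapsto g\cdot x_0$ a bijection onto the orbit $G\cdot x_0$, Milnor--\v{S}varc makes it a quasi-isometry, the orbit is a net by properness and cocompactness, and a bijective quasi-isometry between uniformly discrete spaces is bi-Lipschitz, so $\F(G)\simeq\F(G\cdot x_0)$. The genuine gap is in your step (4), i.e.\ in passing from the orbit net to an \emph{arbitrary} net $N\subseteq X$. Your self-contained workaround asserts that any two nets in the same proper geodesic space are bi-Lipschitz equivalent, via a bijection of bounded displacement. This is false: by Burago--Kleiner (and McMullen) there are separated nets in $\R^2$ that are not bi-Lipschitz equivalent to $\Z^2$ --- the paper itself recalls this, citing \cite{BuKl} in the remark after Corollary~\ref{cor:hyperbolicnet} --- and in particular no bounded-displacement bijection between such nets can exist, since on uniformly discrete spaces such a bijection would indeed be bi-Lipschitz. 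Your alternative, quoting a ``theorem of Kalton'' that quasi-isometric uniformly discrete bounded-geometry spaces have isomorphic Lipschitz-free spaces, is not an available result: Kalton's work on uniformly discrete spaces concerns the approximation property and coarse/uniform embeddings, and isomorphism invariance of $\F$ under quasi-isometry is not known in that generality. So step (4), as written, does not go through.

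The paper closes exactly this gap by working at the level of the free spaces rather than the metric spaces: it invokes \cite[Proposition 5]{hajek2017some}, which says that for any two nets $N_1,N_2\subseteq X$ one has $\F(N_1)\simeq\F(N_2)$, even though $N_1$ and $N_2$ need not be bi-Lipschitz (or bounded-displacement) equivalent. Granting that, it suffices to prove the statement for the single net $N=G\cdot 0$, where your bijective-quasi-isometry argument applies verbatim. Your outline is therefore salvageable, but the reduction to the orbit net must be justified by this net-independence result (or a proof of it), not by a claimed bi-Lipschitz equivalence of arbitrary nets, which already fails in $\R^n$.
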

\begin{proof}
Let $G$, $X$ and an action of $G$ on $X$ as in the statement of the proposition be fixed. First we invoke \cite[Proposition 5]{hajek2017some} which says that for any two nets $N_1,N_2\subseteq X$ we have $\F(N_1)\simeq\F(N_2)$. It follows that it suffices to find one net $N\subseteq X$ such that $\F(N)\simeq\F(G)$. Let $0\in X$ be a distinguished point and let $N$ be the $G$-orbit of $0$. Since $X$ is proper and the action is cocompact, it follows that $N$ is a net in $X$. To show that $\F(N)\simeq\F(G)$ it suffices to prove that $N$ with the restriction of the metric on $X$ is bi-Lipschitz to $G$ (with its word metric). By the Milnor-Schwarz lemma (see \cite[Theorem 8.37]{DruKap}), the map $\phi:G\rightarrow N$ defined by $\phi(g):=g\cdot 0$ is a quasi-isometry (we refer reader not familiar with quasi-isometries again to \cite{DruKap}). Since the action is free, it is also a bijection. It is easy to check that a bijective quasi-isometry between two uniformly discrete metric spaces is in fact a bi-Lipschitz equivalence. This finishes the proof.
\end{proof}

We now recall the definition of the real hyperbolic $n$-space. There are many definitions and we refer the reader to \cite[Chapter 4]{DruKap} for a more thorough discussion. We define $\Hy^n$, for $n\geq 2$, as follows. First we define the following quadratic form; for $x,y\in\Rea^{n+1}$: $$\langle x,y\rangle:=\sum_{i=1}^n x_iy_i-x_{n+1}y_{n+1},$$ and we set $$\Hy^n:=\{x\in\Rea^{n+1}\colon \langle x,x\rangle=-1,x_{n+1}>0\}.$$
A metric $d$ on $\Hy^n$ can be defined using the formula, for $x,y\in\Hy^n$, $$\cosh d(x,y)=-\langle x,y\rangle.$$
We now state the main result of this subsection.
\begin{corollary}\label{cor:hyperbolicnet}
Let $n\geq 2$ and let $N\subseteq \Hy^n$ be a net. Then $\F(N)$ has a Schauder basis.
\end{corollary}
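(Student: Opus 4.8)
The plan is to derive Corollary~\ref{cor:hyperbolicnet} by combining Theorem~\ref{thm:shortlex}, Proposition~\ref{prop:actinggroup}, and a standard fact from the theory of hyperbolic groups, namely that $\Hy^n$ admits a group acting on it freely and cocompactly by isometries, and that every such group is (word-)hyperbolic, hence shortlex combable by the example computation carried out in Subsection~\ref{subsec:examples}. The logical skeleton is short: produce a suitable lattice $\Gamma$ in the isometry group of $\Hy^n$, invoke that $\Gamma$ is hyperbolic, conclude $\F(\Gamma,d_S)$ has a Schauder basis, and transport this along a bi-Lipschitz equivalence to $\F(N)$.

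In detail, the first step is to find a finitely generated group $G$ acting freely and cocompactly by isometries on $\Hy^n$. The canonical choice is a torsion-free cocompact lattice in $\mathrm{Isom}(\Hy^n) = O^+(n,1)$; such lattices exist in every dimension (e.g.\ by Borel's theorem on the existence of cocompact lattices in semisimple Lie groups, or, more elementarily, by taking the fundamental group of a closed hyperbolic $n$-manifold, which exists in all dimensions). Passing to a torsion-free finite-index subgroup if necessary (Selberg's lemma), we may assume the action is free; it is automatically properly discontinuous and cocompact, and $\Hy^n$ is a proper geodesic metric space. The second step is to observe that, since $G$ acts properly discontinuously and cocompactly by isometries on the geodesic space $\Hy^n$, which is itself (Rips-)hyperbolic — $\delta$-hyperbolicity of $\Hy^n$ is classical — the Milnor–Schwarz lemma gives that $G$ with any word metric $d_S$ is quasi-isometric to $\Hy^n$, and quasi-isometry invariance of hyperbolicity then shows $G$ is a hyperbolic group. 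Hence, by the computation in the ``Hyperbolic groups'' example of Subsection~\ref{subsec:examples}, $G$ is shortlex combable with respect to a suitably ordered generating set.

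The third step applies Theorem~\ref{thm:shortlex} to conclude that $\F(G,d_S)$ has a Schauder basis. The fourth and final step uses Proposition~\ref{prop:actinggroup}: with $G$ acting freely and cocompactly by isometries on the proper geodesic space $X = \Hy^n$, any net $N \subseteq \Hy^n$ satisfies $\F(N) \simeq \F(G)$ as Banach spaces. Since having a Schauder basis is an isomorphism invariant of Banach spaces, $\F(N)$ has a Schauder basis, completing the proof.

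I expect the only genuine (and minor) obstacle to be the first step: citing a clean reference for the existence of a torsion-free cocompact lattice in $O^+(n,1)$ for \emph{all} $n \geq 2$, and making sure the ``free'' hypothesis of Proposition~\ref{prop:actinggroup} — rather than merely ``properly discontinuous'' — is met, which is handled by Selberg's lemma. Everything else is a bookkeeping assembly of results already stated or standard (Milnor–Schwarz, quasi-isometry invariance of hyperbolicity, $\delta$-hyperbolicity of $\Hy^n$), so the proof itself will be only a few lines.
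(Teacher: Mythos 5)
Your proposal is correct and follows essentially the same route as the paper: produce a group acting freely and cocompactly by isometries on $\Hy^n$, deduce via Milnor--Schwarz and quasi-isometry invariance that it is hyperbolic and hence shortlex combable, apply Theorem~\ref{thm:shortlex}, and transfer to $\F(N)$ via Proposition~\ref{prop:actinggroup}. The only cosmetic difference is in sourcing the group: you invoke Borel's existence of cocompact lattices in $O^+(n,1)$ plus Selberg's lemma, while the paper takes the fundamental group of a closed hyperbolic $n$-manifold acting by deck transformations on its universal cover --- the same object in substance.
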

\begin{proof}
	It suffices to find a group $G$ acting freely and cocompactly on $\Hy^n$. Indeed, again by the Milnor-Schwarz lemma (\cite[Theorem 8.37]{DruKap}), such $G$ is then finitely generated and quasi-isometric to $\Hy^n$. It follows that $G$ is hyperbolic (\cite[Observation 11.125]{DruKap}). Therefore, as we demonstrated in Subsection~\ref{subsec:examples}, $G$ is shortlex combable. Applying Theorem~\ref{thm:shortlex}, we get that $\F(G)$ has a Schauder basis. Finally, an application of Proposition~\ref{prop:actinggroup} finishes the argument.\medskip

In order to find a group acting freely and cocompactly on $\Hy^n$, one can use several standard results from Riemannian geometry. Let $M$ be an arbitrary $n$-dimensional compact Riemannian manifold without boundary of constant sectional curvature $-1$ equipped with some Riemannian metric. By \cite[Theorem 3.32]{BrHa}, its universal cover (refer to any standard textbook on algebraic topology, e.g. \cite{Hatch}) is isometric to $\Hy^n$. Another standard argument from algebraic topology (see e.g. \cite[Proposition 1.39]{Hatch}) shows that the fundamental group $\pi_1(M)$ acts on the universal cover $\Hy^n$ by deck transformations, which is a free cocompact action by isometries. This finishes the proof.
\end{proof}
We remark that in contrast to the Euclidean space $\R^n$, in which there exist two non Lipschitz equivalent nets (\cite{BuKl}), it has been proved in \cite{Bog} that all nets in $\Hy^n$ are bi-Lipschitz equivalent.

\begin{remark}
Let $\mathcal{G}$ denote the set of all finitely generated hyperbolic groups. A fair question is how many isomorphism types of Banach spaces the set $\{\F(G)\colon G\in\mathcal{G}\}$ contains. As we mentioned, since the free group $F_n$ is hyperbolic, it contains $\F(F_n)\simeq \ell_1$. It is unknown to us whether or not there exists $G\in\mathcal{G}$ such that $\F(G)\not\simeq \ell_1$. Thus it is relevant at this point to reiterate \cite[Question 1]{candido2019isomorphisms}, which asks precisely about an example of $G\in\mathcal{G}$ with $\F(G)\not\simeq\ell_1$, and the discussion following the question. Either answer would bring interesting consequences. If there are such $G$, then we have, potentially many, new examples of Lipschitz-free spaces with Schauder basis. If on the other hand for every $G\in\mathcal{G}$, $\F(G)\simeq \ell_1$, then we have an example of a group with Kazhdan's property (T) that has a metrically proper action by isometries on a renorming of $\ell_1$. We refer to \cite[Question 1]{candido2019isomorphisms} where this is properly discussed and the importance of such a result is explained.
\end{remark}

\section{Problems and Notes}\label{section:problems}

Let us finish by posing some natural questions that follow up this work. The first is whether or not we can generalize Theorem \ref{thm:intro1} to locally compact groups: \medskip

\begin{question} Let $G$ be a locally compact group equipped with a compatible and left-invariant metric. Does $\F(G)$ have the MAP?
\end{question}

In the noncompact case, we have positive answer for finite dimensional Banach spaces with their norm induced metrics. Although this would be a consequence of the mentioned result from  \cite{godefroy2003lipschitz} which states that a Banach space has $\lambda$-BAP if and only if $\F(X)$ does, actually the order of the proofs is reversed. First,  Godefroy and Kalton prove that, for finite dimensional Banach spaces $X$, $\F(X)$ has the MAP, and then use this to prove the result for general Banach spaces. The proof of the finite dimensional part also involves harmonic averaging. 

Since Lipschitz-free spaces over finite dimensional Banach spaces even possess a Schauder basis (\cite{HaPe}), we suggest to turn the attention to locally compact metric groups that are very closely related to such Banach spaces; that is, Carnot groups. We note that a Carnot group is both analytically and algebraically a mild generalization of a finite dimensional Banach space and many results from geometric analysis on Euclidean spaces have been generalized to the setting of Carnot groups (see \cite{LeD}). The duals of Lipschitz-free spaces over Carnot groups have been already investigated in \cite{candido2019isomorphisms} and their Lipschitz-free spaces in \cite{albiac2020lipschitz}.

\begin{question}
Let $G$ be a Carnot group with a Carnot-Carath\' eodory metric. Does $\F(G)$ has the Schauder basis?
\end{question}
In the case of connected Lie groups, one has a canonical compatible left-invariant metric: the left-invariant Riemannian metric. Since such a metric is locally bi-Lipschitz equivalent to the Euclidean metric, and isomorphic properties of Lipschitz-free spaces often depend only on the local behaviour of the metric, the following question is very natural. We note that one could even replace Lie group there with a general connected Riemannian manifold (without boundary).
\begin{question}\label{quest:liegrp}
Let $G$ be a connected (real) Lie group equipped with a left-invariant Riemannian metric. Do we have $\F(G)\simeq \F(\R^n)$, where $n$ is the dimension of the (real) Lie algebra $\mathfrak{g}$ of $G$?
\end{question}

Still in the compact setting, we note that we only required in Theorem \ref{thm:homogeneousspace} that the metric in $G$-homogeneous space is a quotient of a metric in $G$ so that we could apply directly Proposition \ref{prop:projection}. So one could ask if we can drop this condition.

\begin{question} Let $G$ be a (locally) compact group, $M$ be a compact $G$-homogeneous space, and $d$ be a compatible $G$-invariant metric on $M$. Does $\F(G)$ have the MAP? \end{question}

In Appendix A, the reader will find a positive answer in the specific case of the sphere $\Sph^{n-1}=O(n)/O(n-1)$. \medskip

It is natural also to ask about generalizations of Theorem \ref{thm:intro2}.

\begin{question}
Let $G$ be a finitely generated group equipped with a word metric. Does $\F(G)$ admit a Schauder basis, or at least a finite dimensional decomposition?
\end{question}
In Corollary~\ref{cor:hyperbolicnet}, we have shown that for any net $N\subseteq \Hy^n$, $\F(N)$ has a Schauder basis. Given the prominence of the space $\Hy^n$ in geometry and beyond, it is important to understand the Lipschitz-free space of $\Hy^n$ itself. We also note that the hyperbolic spaces $\Hy^n$ together with the Euclidean spaces $\R^n$ and Euclidean spheres $\Sph^n$ are important as the model spaces of spaces of constant curvature (see e.g. \cite{BrHa} for a thorough treatment). Since for $\R^n$ and $\Sph^n$ with their canonical Euclidean metrics we have by \cite[Theorem 4.21]{albiac2020lipschitz}, $\F(\R^n)\simeq\F(\Sph^n)$, the answer to the following would be desirable.
\begin{question}
Does $\F(\Hy^n)$ have the Schauder basis? Do we have $\F(\Hy^n)\simeq \F(\R^n)$?
\end{question}
We note that the previous question is related to the stronger version of Question~\ref{quest:liegrp} that considers a general Riemannian manifold since $\Hy^n$ is a Riemannian manifold with Riemannian metric (see \cite[Proposition 6.17]{BrHa}).

\appendix
\section{On the MAP for $\F(\Sph^n)$}\label{appendSphere}


Let $d\geq 2$ and let $\Sph^{d-1}=O(d)/O(d-1)$ be the $(d-1)$-dimensional sphere equipped with a rotation-invariant metric $D$ which is compatible with the usual topology. Here, we are not assuming that $D$ is a quotient metric, as the ones described in Subsection \ref{subsection:generalCpt}. Let us  show that  $\F(\Sph^{d-1},D)$ has the MAP.  

\begin{proof}
The proof will also follow from Proposition \ref{tool} and summability results. We recall the definitions and results from harmonic analysis that we will use, and establish again a Lipschitz version of Young's convolution inequality for our setting.  Equip  $\Sph^{d-1}$ with its surface area measure $\sigma$, and denote $\omega_d=\sigma(\Sph^{d-1})$. Convolution on $\Sph^{d-1}$ can be defined as follows. First let $\Lambda = (d-2)/2$, and consider the weighted $L^1$ space $L^1(w_\Lambda,[-1,1])$, where $w_\Lambda (x) = (1-x^2)^{\Lambda - 1/2}$ for each $x\in ]-1,1[$. Recall that the norm in $L^1(w_\Lambda,[-1,1])$ is defined by
$$
\|f\|_{\Lambda,1} = c_\Lambda \int_{-1}^1 |f(x)|w_\Lambda (x)dx,
$$
where $c_\Lambda$ is the normalization constant such that $c_\Lambda \int_{-1}^1w_\Lambda (x)dx =1$.
 For each  $f\in L^1(\Sph^{d-1})$ and $g\in L^1(w_\Lambda,[-1,1])$, the convolution $f\ast g: \Sph^{d-1}\rightarrow \R$ is defined by  
$$
(f\ast g)(x)= \frac{1}{\omega_d}\int_{\Sph^{d-1}} f(y)g(x\cdot y)d\sigma(y).
$$
We now prove the validity of Young's inequality. Let $f\in \mathrm{Lip}(\Sph^{d-1})$ and $g \in L^1(w_\Lambda,[-1,1])$, and let $x,y\in \Sph^{d-1}$. There is a rotation $R$ with $y=Rx$, thus
\begin{align*}
|f*g(x) - f*g(y)| & = \left| \frac{1}{\omega_d}\int_{\Sph^{d-1}} f(z)g(x\cdot z)d\sigma(z) - \frac{1}{\omega_n}\int_{\Sph^{d-1}} f(z)g(Rx\cdot z)d\sigma(z) \right|\\
& = \frac{1}{\omega_d}\left| \int_{\Sph^{d-1}} f(z)g(x\cdot z)d\sigma(z) - \int_{\Sph^{d-1}} f(Rz)g(Rx\cdot Rz)d\sigma(z) \right|\\
& = \frac{1}{\omega_d}\left| \int_{\Sph^{d-1}} f(z)g(x\cdot z)d\sigma(z) - \int_{\Sph^{d-1}} f(Rz)g(x\cdot z)d\sigma(z) \right|\\
& \leq \frac{1}{\omega_d}\int_{\Sph^{d-1}} |f(z)-f(Rz)||g(x\cdot z)|d\sigma(z) \\
& \leq \|f\|_{\mathrm{Lip}}\frac{1}{\omega_d}\int_{\Sph^{d-1}} D(z,Rz)|g(x\cdot z)|d\sigma(z)\\
& = \|f\|_{\mathrm{Lip}}D(x,y)\frac{1}{\omega_d}\int_{\Sph^{d-1}} |g(x\cdot z)|d\sigma(z)\\
& = \|f\|_{\mathrm{Lip}}D(x,y)\frac{1}{\omega_n}\int_{-1}^1 |g(t)|w_\Lambda (t) dt \leq \|f\|_{\mathrm{Lip}}\|g\|_{\Lambda,1}D(x,y).
\end{align*}
It follows that $f\ast g\in\mathrm{Lip}(\Sph^{d-1})$, and $\|f\ast g\|_{\mathrm{Lip}}\leq \|f\|_{\mathrm{Lip}}\|g\|_{\Lambda,1}.$

The finite dimensional space of real homogeneous harmonic polynomials of degree $n$ on $\R^d$ restricted to $\Sph^{d-1}$ is denoted by $\mathcal{H}^d_n$. These spaces are mutually orthogonal with respect to the inner product 
$$
\langle f,g\rangle_{\Sph^{d-1}} = \frac{1}{\omega_n}\int_{\Sph^{d-1}} f(x)g(x)d\sigma(x)
$$
and they densely span $L^2(\Sph^{d-1})$ (see e.g. \cite{dai2013approximation}, Theorem 2.2.2). Denoting by proj$_n$ the corresponding projection operators, we can associate the partial sum operators
$$
S_n f=\sum_{k=0}^n \mbox{proj}_nf.
$$
These are finite-rank and satisfy $S_nf = f\ast K_n$, where 
$$
K_n(t) = \sum_{k=0}^n \frac{k+\Lambda}{\Lambda} C_k^\Lambda(t)
$$
and $C_k^\Lambda$ are the Gegenbauer polynomials (\cite{dai2013approximation}, Proposition 2.2.1). Fix $\delta\geq d-1$ and consider the averages 
$$
K^\delta_n(t) = \frac{1}{A_n^\delta}\sum_{k=0}^n A_{n-k}^\delta \frac{k+\Lambda}{\Lambda} C_k^\Lambda(t),
$$
where $A_k^\delta = \binom{k+\delta}{k}=\frac{(\delta+k)(\delta+k-1)\dots(\delta+1)}{k!}$. These give rise to a sequence of finite-rank operators on $L^2(\Sph^{d-1})$ defined by $S_n^\delta: f\mapsto f\ast K_n^\delta$. Write $\Lambda_n^\delta:=\|S_n^\delta\|_1= \sup \{\|S_n^\delta h\|_1: h\in B_{L^1(\Sph^{d-1})}\}$.  By \cite[Theorem 2.4.3]{dai2013approximation}, for each $n\in \N$, $S_n^\delta$ is a nonnegative operator, which implies that $K_n^\delta(t)\geq 0$, $t\in [-1,1]$. It follows that 
$$
\Lambda_n^\delta=\sup_{\|f\|_1 \leq 1} \|f\ast K_n^\delta\|_1 \geq \|1\ast K_n^\delta\|_1 = \|K_n^\delta\|_{\Lambda,1}.
$$
It is clear that also $S_n^\delta(C(\Sph^{d-1}))\subset C(\Sph^{d-1})$ and that its restriction to $C(\Sph^{d-1})$ is continuous in the uniform norm. Moreover, by \cite[Corollary 2.4.5]{dai2013approximation}, when  $f$ is continuous (and in particular when it is Lipschitz) on $\Sph^{d-1}$, we have that $S_n^\delta f$ converges to $f$ uniformly. On the other hand, by Young's inequality, for each $f\in \mathrm{Lip}(\Sph^{d-1})$ we have that
$$
\|S_n^\delta f\|_{\mathrm{Lip}} = \|f\ast K_n^\delta \|_{\mathrm{Lip}} \leq  \|f\|_{\mathrm{Lip}} \| K_n^\delta \|_{\Lambda,1}\leq \Lambda_n^\delta \|f\|_{\mathrm{Lip}}. 
$$
To conclude, let $\varepsilon>0$. By \cite[Theorem2.4.4]{dai2013approximation}, there is some $n_\varepsilon$ such that $\Lambda_n^\delta<1+\varepsilon$, for each $n\geq n_\varepsilon$. Thus $S_n^\delta$, $n\geq n_\varepsilon$ satisfy conditions (a)--(d) in Proposition \ref{tool} and we are done. \end{proof}

\noindent{\bf Acknowledgement.} We would like to thank to Gilles Godefroy who suggested to us to study these problems.

\bibliographystyle{siam}
\bibliography{references}
\end{document}